\newcommand{\GF}{{\mathbb F}}
\newcommand{\R}{{\mathbb R}}
\newcommand{\wt}{{\rm wt}}
\DeclareMathOperator{\Harm}{Harm}
\newtheorem{Thm}{Theorem}[section]
\newtheorem{Lem}[Thm]{Lemma}
\newtheorem{Cor}[Thm]{Corollary}
\newtheorem{Prop}[Thm]{Proposition}
\theoremstyle{definition}
\newtheorem{Def}[Thm]{Definition}
\newtheorem{Rem}[Thm]{Remark}
\newcommand{\RR}{\mathbb{R}}
\newcommand{\ZZ}{\mathbb{Z}}
\newcommand{\NN}{\mathbb{N}}
\newcommand{\FF}{\mathbb{F}}
\title[A note on the Assmus--Mattson theorem for some binary codes]{
A note on the Assmus--Mattson theorem for some binary codes
}
\author{Tsuyoshi Miezaki}
\address{		Faculty of Science and Engineering, 
		Waseda University, 
		Tokyo 169--8555, Japan
}
\email{miezaki@waseda.jp} 
\author{Hiroyuki Nakasora*}
\thanks{*Corresponding author}
\address{Institute for Promotion of Higher Education, Kobe Gakuin University, Kobe
651--2180, Japan}
\email{nakasora@ge.kobegakuin.ac.jp}
\date{}
\keywords{Assmus--Mattson theorem, $t$-design, harmonic weight enumerator}
\subjclass[2010]{Primary 05B05; Secondary 94B05, 20B25}
\begin{document}

\begin{abstract}

We previously proposed   
 the first nontrivial examples of a code having support $t$-designs for all weights obtained 
from {the Assmus--Mattson theorem} and having support $t'$-designs for some weights with some $t'>t$. 
This suggests  
the  possibility of generalizing the Assmus--Mattson theorem, which is very important in design and coding theory. 
In the present paper, we generalize this example as a strengthening of the Assmus--Mattson theorem 
along this direction. 
As a corollary, 
we provide a new characterization of 
the extended Golay code $\mathcal{G}_{24}$. 
\end{abstract}

\maketitle

\setcounter{section}{+0}
\section{Introduction}

Let $D_{w}$ be the support design of a binary code $C$ for weight $w$ and
\begin{align*}
\delta(C)&:=\max\{t\in \mathbb{N}\mid \forall w, 
D_{w} \mbox{ is a } t\mbox{-design}\},\\ 
s(C)&:=\max\{t\in \mathbb{N}\mid \exists w \mbox{ s.t.~} 
D_{w} \mbox{ is a } t\mbox{-design}\}.
\end{align*}

We note $\delta(C) \leq s(C)$. 
In our previous paper \cite{extremal design2 M-N}, 
we considered the possible occurrence of $\delta(C)<s(C)$.
If $C$ is an extremal Type II code, 
there is no known example of $\delta(C)<s(C)$.
In \cite{MN-tec}, we found the first nontrivial example of a code 
having support $t$-designs for all weights obtained 
from the Assmus--Mattson theorem and 
having support $t'$-designs for some weights with some $t'>t$. 
This suggests  
a possibility to generalize the Assmus--Mattson theorem. 
Herein, we strengthen the Assmus--Mattson theorem 
along this direction. 

One of the motivations for this research is that 
the Assmus--Mattson theorem is one of the most 
important theorems in design and coding theory. 
Assmus--Mattson-type theorems 
in the theory of lattices and vertex operator algebra are 
known as the Venkov theorem and the H\"ohn theorem \cite{{Venkov},{H1}}. 
For example, $E_8$-lattice and moonshine vertex operator algebra 
$V^{\natural}$ provide 
spherical $7$-designs for all $(E_8)_{2m}$ and 
conformal $11$-designs for all $(V^{\natural})_{m}$, $m>0$. 
It is interesting to note that 
the $(E_8)_{2m}$ and $(V^{\natural})_{m+1}$ 
are a spherical $8$-design and a conformal $11$-design if and only if 
$\tau(m)=0$, where 
\[
q\prod_{m=1}^{\infty}(1-q^m)^{24}=\sum_{m=0}^{\infty}\tau(m)q^m, 
\]
and D.H.~Lehmer conjectured in \cite{Lehmer} that 
\[
\tau(m)\neq 0
\]
for all $m$ \cite{{Miezaki},{Venkov},{Venkov2}}. 
Therefore, it is interesting to determine the lattice $L$
(resp.~vertex operator algebra $V$) such that 
$L_m$ (resp.~$V_m$) are spherical (resp.~conformal) $t$-designs 
for all $m$ by the Venkov theorem (resp.~H\"ohn theorem) and 
$L_{m'}$ (resp.~$V_{m'}$) are spherical (resp.~conformal) $t'$-designs 
for some $m'$ with some $t'>t$. 
This work is inspired by these facts. 
Namely, we consider the possible occurrence of $\delta(C)<s(C)$.
For related results, 
see
\cite{{Bannai-Koike-Shinohara-Tagami},{BM1},{BM2},{BMY},{Miezaki2},{extremal design H-M-N},{MMN},{extremal design2 M-N}}.

Here, we explain our main results. 
Throughout this paper, $C$ will denote a 
binary $[n,k,d]$ code and $\mathbf{1}_n\in C$. 
Let $C^{\perp}$ be a binary $[n,n-k,d^{\perp}]$ dual code of $C$, and 
let us set $C_u:=\{c\in C\mid \wt(c)=u\}$. 
Note that $d^{\perp}$ is even since $\mathbf{1}_n\in C$. 
We always assume that there exists $t\in \NN$ that
satisfies the following condition: 
\begin{align}
d^{\perp}-t=\sharp\{u\mid C_u \neq \emptyset, 0<u\leq n-t\}. \label{eqn:AM}
\end{align}
This is a condition of the Assmus--Mattson theorem 
(see Theorem \ref{thm:assmus-mattson}), and say the AM-condition.
Let $D_{u}$ and $D^{\perp}_{w}$ be the support designs of $C$ and $C^{\perp}$ 
for weights $u$ and $w$, respectively. 
Then, by (\ref{eqn:AM}) and Theorem \ref{thm:assmus-mattson}, 
$D_u$ and $D^{\perp}_w$ are $t$-designs 
(also $s$-designs for $0<s<t$)
for any $u$ and $w$, respectively. 

The main results of the present paper are the following theorems. 
Let $C$ be satisfying the AM-condition.
Then, we impose some restrictions for $d^{\perp}$ and $t$.
\begin{Thm}\label{thm:main1}
\begin{enumerate}
\item [{\rm (1)}]
If $C$ is satisfying the AM-condition
with $d^{\perp}-t=1$, 
then $(d^{\perp},t)=(2,1)$ or $(4,3)$.

Moreover, we have $\delta(C)=s(C)=\delta(C^{\perp})=s(C^{\perp})=1$ or $3$.

\item [{\rm (2)}]
If $C$ is satisfying the AM-condition with $d^{\perp}-t=2$, 
then $(d^{\perp},t)=(4,2)$.

\item [{\rm (3)}]
If $C$ is satisfying the AM-condition with $d^{\perp}-t=3$, 
then $(d^{\perp},t)=(4,1), (6,3)$, or $(8,5)$.

\end{enumerate}
\end{Thm}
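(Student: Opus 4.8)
The plan is to use the hypothesis \eqref{eqn:AM} together with the symmetry forced by $\mathbf{1}_n\in C$ to pin down the weight set of $C$, and then to read off the admissible pairs $(d^{\perp},t)$ from the low-degree coefficients of the MacWilliams transform. First I would record two structural facts. Since $\mathbf{1}_n\in C$, the map $c\mapsto \mathbf{1}_n+c$ shows that $u$ is a nonzero weight of $C$ if and only if $n-u$ is, so the nonzero weights are symmetric about $n/2$, the weight $n$ always occurs, and $A_u=A_{n-u}$. Next, because each shell $D_u$ is a nontrivial $t$-design its block size is at least $t$, whence $d\ge t$, so there is no nonzero weight in $(0,t)$ and hence none in $(n-t,n)$. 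Consequently the only nonzero weight exceeding $n-t$ is $n$ itself, and \eqref{eqn:AM} becomes the clean count $N:=\#\{u>0\mid C_u\neq\emptyset\}=d^{\perp}-t+1$. Combined with the symmetry this forces the weight set: for $d^{\perp}-t=1$ it is $\{n/2,n\}$; for $d^{\perp}-t=2$ it is $\{d,n-d,n\}$; and for $d^{\perp}-t=3$ it is $\{d,n/2,n-d,n\}$ (with $n$ even whenever the self-paired weight $n/2$ appears).

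With the weight set known, $W_C$ has only the free parameters $n$, the minimum weight $d$, and one or two multiplicities $M$. Writing the dual coefficients through the Krawtchouk expansion $|C|\,A_j^{\perp}=\sum_u A_u K_j(u)$ and using $K_j(n-u)=(-1)^jK_j(u)$, every odd coefficient vanishes (so $C^\perp$ has only even weights, recovering that $d^{\perp}$ is even) and the even ones reduce to short expressions in $n,d,M$. The requirement that $C^{\perp}$ have minimum weight exactly $d^{\perp}$ is precisely $A_2^{\perp}=A_4^{\perp}=\cdots=A_{d^{\perp}-2}^{\perp}=0$ with $A_{d^{\perp}}^{\perp}\neq0$. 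In the two-weight case this settles part~(1) immediately: $A_2^{\perp}=0$ already forces $M=2(n-1)$ and $n=2^{k-1}$, after which $A_4^{\perp}$ is a sum of two positive terms and cannot vanish; hence $d^{\perp}\in\{2,4\}$ and $t=d^{\perp}-1\in\{1,3\}$.

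For parts (2) and (3) the same positivity of the leading dual coefficient is what must eventually bound $d^{\perp}$, but I expect the genuine obstacle to be the borderline cases, where the vanishing conditions $A_2^{\perp}=\cdots=A_{d^{\perp}-2}^{\perp}=0$ admit a formal solution for $(d,M)$ and so cannot be excluded by positivity alone—concretely, ruling out $d^{\perp}=6$ in part~(2) and $d^{\perp}=10$ (and every larger value) in part~(3). There I would impose that all the $A_j^{\perp}$ and all the design indices $\lambda_i=\lambda\binom{n-i}{t-i}/\binom{k-i}{t-i}$ of the relevant shells be nonnegative integers, together with $d^{\perp}$ even and $t\le d$, and then show that the resulting Diophantine constraints have no admissible solution outside the listed pairs. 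This integrality-and-positivity analysis is the technical heart of the argument, and it is precisely the step that singles out $n=24$ and $\mathcal{G}_{24}$ in the case $(d^{\perp},t)=(8,5)$, yielding the characterization announced in the corollary.

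Finally, the ``moreover'' in part~(1) follows because a two-weight code has a single nontrivial shell $D_{n/2}$, the shell $D_n=\{\mathbf{1}_n\}$ being the trivial complete design of every strength; hence $\delta(C)=s(C)$ equals the exact strength of $D_{n/2}$. That strength is at least $t$ by hypothesis, and to see it is no larger I would verify that $D_{n/2}$ is not a $(t+1)$-design: a short harmonic-weight-enumerator computation shows that, for a single nontrivial shell, $D_{n/2}$ being a $(t+1)$-design would force $A_{t+1}^{\perp}=0$, contradicting $d^{\perp}=t+1$. The identical argument applied to $C^{\perp}$, which in these cases again has two-weight shape, gives $\delta(C^{\perp})=s(C^{\perp})=t$, completing the statement.
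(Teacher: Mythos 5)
Your treatment of part (1) is correct and essentially complete, though it takes a different route from the paper: you force $d^{\perp}\le 4$ by computing $A_2^{\perp}$ and $A_4^{\perp}$ directly from the MacWilliams transform of the two-weight enumerator, whereas the paper (Lemmas \ref{lem:1 or 3} and \ref{lem:not t+1}) argues combinatorially that $D_{n/2}$ is self-complementary, hence of odd strength by Alltop's theorem (Theorem \ref{thm:alltop}), and has at most two block intersection numbers, so Theorem \ref{thm:S=1,2,3} caps its strength at $3$. Both routes work, and yours is more self-contained. One caveat in your ``moreover'': the implication ``$D_{n/2}$ a $(t+1)$-design $\Rightarrow A_{t+1}^{\perp}=0$'' is not immediate. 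What the harmonic identity gives is $W_{C,f}=0$ for all $f\in\Harm_{t+1}$ (using $\widetilde{f}(\mathbf{1}_n)=0$), hence $W_{C^{\perp},f}=0$, hence $D^{\perp}_{d^{\perp}}$ is a $d^{\perp}$-design whose blocks have size $d^{\perp}$; you must still note that such a design is complete and that completeness is absurd (it would force $d^{\perp}<t+1$) before concluding $A_{d^{\perp}}^{\perp}=0$. This is a one-line repair, but it should be said.

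The genuine gap is in parts (2) and (3), where you explicitly defer what you call the technical heart---ruling out $d^{\perp}=6$ in part (2) and $d^{\perp}\ge 10$ in part (3)---to an unexecuted ``integrality-and-positivity'' analysis, and it is not clear that analysis would close. For part (2), the vanishing of $A_2^{\perp}$ and $A_4^{\perp}$ is two linear conditions on the single multiplicity $\alpha$, whose compatibility is one polynomial relation in $(n,d)$; such a relation can have admissible solutions, and positivity and integrality of $\alpha$ do not obviously eliminate them. The paper sidesteps this entirely: any two distinct weight-$d$ codewords sum to a codeword of weight $d$, $n-d$, or $n$, so $D_d$ has at most two block intersection numbers, whence Theorem \ref{thm:S=1,2,3} gives $t\le 3$ (the exceptional $t=4$ case is the $4$-$(23,7,1)$ design, incompatible with $|C|=2\alpha+2$ being a power of $2$), and the parity of $t=d^{\perp}-2$ then forces $t=2$. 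For part (3) the paper's argument is an elimination, not an integrality count: if $d^{\perp}>8$, the four conditions (\ref{eq:y2})--(\ref{eq:y8}) in the two unknowns $\alpha,\beta$ force two polynomial relations between $n$ and $d$ (Proposition \ref{pro:d^perp=8} and the quartic in Theorem \ref{thm:d^perp>8}) that share no common factor, an outright contradiction. You would need to supply an argument of comparable force; as written, the proposal establishes part (1) but not parts (2) and (3).
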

By Theorem \ref{thm:main1}, 
if $C$ is satisfying the AM-condition with $d^{\perp}-t=1$, 
then $\delta(C)=s(C)=\delta(C^{\perp})=s(C^{\perp})$.
For cases where $d^{\perp}-t=2$ or $3$,
the following theorem gives a criterion for $n$ and $d$ such that 
$\delta(C^{\perp})<s(C^{\perp})$ occurs.
\begin{Thm}\label{thm:main3}
\item [{\rm (1)}]
Let $C$ be satisfying the AM-condition with $(d^{\perp},t)=(4,2)$. 
If the equation
\[ 
\sum_{i=0}^{w}  (-1)^{w-i} \binom{d-3}{w-i}\binom{n-2d}{2i+1}
=0 
\] 
is satisfied, 
then
$D^{\perp}_{2w+4}$ is a $3$-design. 

\item [{\rm (2)}]

Let $C$ be satisfying the AM-condition with $(d^{\perp},t)=(4,1)$ or $(6,3)$. 
If the equation
\[ 
\left(\sum_{i=0}^{w}  (-1)^{w-i} \binom{d-(t+1)}{w-i}\binom{n-2d}{2i}\right)+
(-1)^{w+1} \binom{n/2-(t+1)}{w} 
=0 
\] 
is satisfied, 
then
$D^{\perp}_{2w+t+1}$ is a $(t+1)$-design. 
\end{Thm}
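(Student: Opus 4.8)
The plan is to run the whole argument through Bachoc's theory of harmonic weight enumerators. For a harmonic function $f$ of degree $k$, set $W_{C,f}(x,y)=\sum_{c\in C}\tilde f(\supp c)\,x^{n-\wt(c)}y^{\wt(c)}$, where $\tilde f$ is the canonical extension of $f$ to all subsets. I would use three standard facts: $W_{C,f}(x,y)=(xy)^{k}Z_{C,f}(x,y)$ with $\deg Z_{C,f}=n-2k$; up to a scalar $Z_{C^\perp,f}(x,y)$ is the length-$(n-2k)$ MacWilliams transform $Z_{C,f}(x+y,x-y)$; and $D^\perp_{w}$ is a $t'$-design precisely when $[x^{n-w}y^{w}]W_{C^\perp,f}=0$ for every harmonic $f$ of degree $1\le k\le t'$. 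Since $C$ is applicable to the Assmus--Mattson theorem with parameter $t$, the designs $D^\perp_{w}$ are already $t$-designs, so $W_{C^\perp,f}=0$ for all $f$ of degree $\le t$; to prove the theorem I only have to control the degree-$(t+1)$ part and show that the coefficient at the prescribed weight vanishes.

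First I would pin down $W_{C,f}$ for $f$ of degree $k=t+1$. By the counting condition (\ref{eqn:AM}) together with $\mathbf{1}_n\in C$, the nonzero weights of $C$ form a complementation-symmetric set: in the case $(d^\perp,t)=(4,2)$ this set is $\{d,n-d,n\}$, while in the cases $(4,1)$ and $(6,3)$ it is $\{d,n/2,n-d,n\}$ (so $n$ is even and $n/2$ is a genuine weight). The weight-$n$ term drops out because $\tilde f(\Omega)=0$ for $f$ of positive degree, and the complementation relation $\tilde f(\Omega\setminus X)=(-1)^{k}\tilde f(X)$, which follows from $\mathbf{1}_n\in C$, links the coefficients at weights $u$ and $n-u$ by the sign $(-1)^{k}$. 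Hence $Z_{C,f}$ depends on at most the coefficient $A=\sum_{c\in C_d}\tilde f(\supp c)$ and, in the even case, on the middle coefficient $M=\sum_{c\in C_{n/2}}\tilde f(\supp c)$.

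The key structural step is the reduction to a single free scalar. In the odd case $k=3$ the complementation sign is $-1$, the middle weight cannot occur, and $Z_{C,f}=A\bigl(x^{n-d-3}y^{d-3}-x^{d-3}y^{n-d-3}\bigr)$ already involves only $A$. In the even case $k=t+1$ the sign is $+1$ and $Z_{C,f}=A\bigl(x^{N-(d-k)}y^{d-k}+x^{d-k}y^{N-(d-k)}\bigr)+Mx^{N/2}y^{N/2}$ with $N=n-2k$; imposing that $C^\perp$ has no codeword of weight $t+1<d^\perp$, i.e. $[y^{0}]Z_{C^\perp,f}=0$, forces the linear relation $M=-2A$. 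In either case $Z_{C,f}=A\cdot P(x,y)$ for an explicit polynomial $P$ independent of $f$, so $Z_{C^\perp,f}=A\cdot P(x+y,x-y)$ up to a scalar.

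It then remains to extract the relevant coefficient of $Z_{C^\perp,f}$ — the $y^{2w+1}$-coefficient in part (1) and the $y^{2w}$-coefficient in part (2) — equivalently the coefficient of the prescribed weight in $W_{C^\perp,f}$. Factoring $P(x+y,x-y)$ as $(x^2-y^2)^{d-k}\bigl[(x+y)^{n-2d}\mp(x-y)^{n-2d}\bigr]$, with the minus sign in the odd case and the plus sign together with the contribution $-2(x^2-y^2)^{N/2}$ of the middle term in the even case, and expanding, the sought coefficient equals $A$ times exactly the binomial sum in the statement: the odd-index binomials $\binom{n-2d}{2i+1}$ arise from $(x+y)^{n-2d}-(x-y)^{n-2d}$ in part (1), while the even-index $\binom{n-2d}{2i}$ together with $(-1)^{w+1}\binom{n/2-(t+1)}{w}$ arise in part (2). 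Because $A$ is the only $f$-dependent quantity, vanishing of the binomial sum forces $[x^{n-w'}y^{w'}]W_{C^\perp,f}=0$ for every harmonic $f$ of degree $t+1$ at the relevant weight $w'$, which upgrades the support design to a $(t+1)$-design. The main obstacle is precisely the structural reduction to one parameter — correctly reading off the weight set of $C$ from (\ref{eqn:AM}) and using the low-weight vanishing of $C^\perp$ to eliminate $M$; once this is done, the remaining identification with the stated sum is a routine binomial computation.
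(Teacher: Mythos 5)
Your proposal is correct and follows essentially the same route as the paper: harmonic weight enumerators of degree $t+1$, Bachoc's transform, elimination of one free coefficient using that $C^{\perp}$ has no words of weight below $d^{\perp}$, and extraction of the stated binomial sum via Delsarte's criterion (together with the already-known $t$-design property for the lower harmonic degrees). The only cosmetic difference is that in part (1) you derive the relation between the two coefficients from the complementation identity $\tilde{f}(\Omega\setminus X)=(-1)^{k}\tilde{f}(X)$ coming from $\mathbf{1}_n\in C$, whereas the paper obtains the equivalent relation $a'+b'=0$ from the vanishing of the coefficient of $x^{n-6}$ on the dual side; both are valid and lead to the same polynomial identity.
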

This theorem provides a strengthening of the Assmus--Mattson theorem 
for some particular cases.
In Section~\ref{sec:main3}, 
we discuss the 
parameters $n$ and $d$ 
that satisfy the condition in the Theorem \ref{thm:main3} (2). 
In particular, 
we show the following corollary: 
\begin{Cor}\label{cor:main1}
Let $C$ be satisfying the AM-condition with 
$(d^{\perp},t)=(4,1)$.
For $n\leq 10000$, in Table \ref{tab: B}, 
we give the parameters $n$ and $d$ 
such that $\delta(C)<s(C)$ occurs. 

\end{Cor}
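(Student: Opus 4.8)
The plan is to reduce Corollary~\ref{cor:main1} to the explicit vanishing condition supplied by Theorem~\ref{thm:main3}(2) and then carry out a finite search. Specializing that theorem to $(d^{\perp},t)=(4,1)$, the relevant support design acquires strength $t+1=2$ precisely when
\[
f_{n,d}(w):=\sum_{i=0}^{w}(-1)^{w-i}\binom{d-2}{w-i}\binom{n-2d}{2i}+(-1)^{w+1}\binom{n/2-2}{w}=0 .
\]
Because $C$ is applicable to the Assmus--Mattson theorem with $t=1$, every support design (of $C$ and of $C^{\perp}$) is already a $1$-design, so the uniform strength is $1$; hence a single admissible zero of $f_{n,d}$ produces a support design of strength $2$, strictly larger than the common strength, witnessing $\delta<s$ for the code in question. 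The task is therefore to determine all $(n,d)$ with $n\le 10000$ for which $f_{n,d}(w)=0$ for some admissible $w$.

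First I would recast $f_{n,d}$ in a form suited to computation. Writing $m:=n/2-d$, using $\sum_{j}\binom{b}{2j}x^{j}=\tfrac12[(1+\sqrt{x})^{b}+(1-\sqrt{x})^{b}]$ with $b=n-2d=2m$, and the identity $n/2-2=(d-2)+\tfrac12(n-2d)$, the generating function collapses to
\[
\sum_{w\ge 1}f_{n,d}(w)\,x^{w}=2x\,(1-x)^{d-2}\Bigl(\sum_{j\ge 0}\binom{m}{2j+1}x^{j}\Bigr)^{2}.
\]
This identity is the technical core: the squared factor has nonnegative coefficients, so every zero of $f_{n,d}$ is forced by cancellation coming from the alternating factor $(1-x)^{d-2}$ (in particular there are none when $d=2$ unless $m=1$). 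It also pins down the admissible ranges: $n$ even, $0<d<n/2$, polynomial degree $1+(d-2)+2\lfloor (m-1)/2\rfloor$, and the designs in play correspond to weights $2w+2$ with $4\le 2w+2\le n$, i.e. $1\le w$ up to that degree. Before searching I would record the feasibility constraints imposed by $d^{\perp}=4$: the three conditions $A^{\perp}_{1}=A^{\perp}_{2}=A^{\perp}_{3}=0$ determine the weight enumerator of $C$ (whose nonzero weights are $d,n/2,n-d,n$) as an explicit function of $(n,d)$, and one retains only those pairs for which the resulting multiplicities are nonnegative integers and $|C|$ is a power of $2$.

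With these preparations the proof becomes a finite verification: for each even $n\le 10000$ and each admissible $d$, evaluate the coefficients of the polynomial above and test whether some $w$ in the admissible range gives $f_{n,d}(w)=0$; the surviving pairs are exactly those collected in Table~\ref{tab: B}. The main obstacle is not any individual evaluation but guaranteeing the enumeration is simultaneously correct and exhaustive. Concretely, one must state the admissible $w$-range and the feasibility constraints sharply enough that no genuine example is discarded and no spurious parameter pair is admitted, keep careful track of the strength comparison between $C$ and $C^{\perp}$, and confirm that each retained zero forces a support design of strength exactly $2$ while the global strength remains $1$ (so that it truly witnesses $\delta<s$ rather than a coincidental uniform jump). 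Ruling out such accidental higher-order vanishing and certifying completeness of the search are the delicate points; the remaining arithmetic is routine.
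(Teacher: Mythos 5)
Your reduction is the same as the paper's starting point (the vanishing condition of Theorem \ref{thm:main3} (2) specialized to $t=1$, combined with the integrality constraint on $\alpha=n(2^{k-1}-n)/(n-2d)^2$ from Lemma \ref{lem:d^perp=6} (1)), and your generating-function identity
$\sum_{w}f_{n,d}(w)x^{w}=2x(1-x)^{d-2}\bigl(\sum_{j}\binom{m}{2j+1}x^{j}\bigr)^{2}$
is correct --- it is the dehomogenized form of the perfect-square factorization $(x^2-y^2)^{d-2}\bigl((x+y)^{n/2-d}-(x-y)^{n/2-d}\bigr)^2$ implicit in equation (\ref{eqn:2w}), and it is arguably a cleaner way to see the structure. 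Where you diverge is in how the search is organized: the paper does not sweep all $(n,d,w)$; instead Proposition \ref{thm:1} extracts closed-form Diophantine conditions for specific weights ($w=6,8$ and $w$ near $n$) and for the families $n-2d=6$ and $n-2d=8$, and the table is generated by solving those conditions together with the integrality constraint. Your brute-force sweep buys (in principle) exhaustiveness, but two caveats follow. First, the paper only claims completeness of the table for $n\le 1000$ (Remark 6.1 (3)); for $1000<n\le 10000$ the table records solutions of Proposition \ref{thm:1}'s conditions, so your assertion that a full sweep returns \emph{exactly} the tabulated pairs is not something the paper guarantees, and the $O(n^3)$ big-integer search up to $n=10^4$ is far costlier than the paper's closed-form route. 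Second, you correctly flag but do not resolve the point that a zero of $f_{n,d}$ only yields $s(C^{\perp})\ge 2$, and one still needs $\delta(C^{\perp})=1$ (equivalently, that not every coefficient vanishes and that the proportionality constant $c$, which depends on whether $D_d$ is itself a $2$-design, is nonzero); the paper is equally silent on this, so it is not a gap relative to the published argument, but it is the one place where both proofs lean on an unstated nondegeneracy assumption.
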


Let $C$ be satisfying the AM-condition with 
$(d^{\perp},t)=(8,5)$. 
Then there is no possibility of $\delta(C^{\perp})<s(C^{\perp})$. 
In fact, $C$ must be the extended Golay code $\mathcal{G}_{24}$. 
\begin{Thm}\label{thm:main4}
Let $C$ be satisfying the AM-condition with 
$(d^{\perp},t)=(8,5)$. 
Then $C$ is the extended Golay code $\mathcal{G}_{24}$. 
\end{Thm}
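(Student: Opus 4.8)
The plan is to determine the weight spectrum of $C$ first, then its full weight enumerator via duality, and finally to reduce the identification of $(n,d)$ to an elementary Diophantine equation. To fix the spectrum, note that by the Assmus--Mattson theorem $D_w$ is a nonempty $5$-design for every weight $w$ with $C_w\neq\emptyset$; since a nonempty $t$-design has block size at least $t$, each such $w$ satisfies $w\geq 5$, and because $\mathbf{1}_n\in C$ the complement $c+\mathbf{1}_n$ of a weight-$w$ word is a weight-$(n-w)$ word, so $n-w\geq 5$ as well. Hence every nonzero weight other than $n$ lies in $[5,n-5]$. Now \eqref{eqn:AM} with $(d^\perp,t)=(8,5)$ asserts that there are exactly $d^\perp-t=3$ weights in $(0,n-5]$, so these are precisely all the nonzero weights below $n$; the complementation symmetry $|C_w|=|C_{n-w}|$ makes this $3$-element set invariant under $w\mapsto n-w$, which forces $n$ even and the spectrum
\[
\{0,\,a,\,n/2,\,n-a,\,n\},\qquad 5\leq a<n/2,\quad d=a.
\]

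Next I would invoke MacWilliams duality. Writing the weight distribution $A_w:=|C_w|$, put $A_a=A_{n-a}=\alpha$, $A_{n/2}=\beta$, $A_0=A_n=1$, so $|C|=2+2\alpha+\beta$, and let $B_w$ be the weight distribution of $C^\perp$. Since $\mathbf{1}_n\in C$, the dual $C^\perp$ is even, so $B_w=0$ for odd $w$ automatically, and $d^\perp=8$ amounts to $B_2=B_4=B_6=0$. Through the MacWilliams transform these become three linear equations in the two unknowns $\alpha,\beta$, and their consistency forces the single relation
\[
(n-2a)^2=3n-8,
\]
after which solving the system produces $\alpha,\beta$ explicitly together with
\[
|C|=\frac{4\,n\,a\,(n-a)}{3}.
\]
(One checks that $(n,a)=(24,8)$ yields $\alpha=759$, $\beta=2576$, $|C|=2^{12}$, exactly as for $\mathcal{G}_{24}$.)

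The final step is arithmetic. Since $C$ is linear, $|C|=2^k$, so $n\,a\,(n-a)=3\cdot 2^{k-2}$; hence $n$, $a$ and $n-a$ are all $3$-smooth, and exactly one of them is divisible by $3$, only to the first power. The case $3\mid a$ is impossible: then $n$ and $n-a$ are powers of $2$, and $a<n-a<n$ forces their exponents to satisfy two incompatible inequalities. In each of the two surviving cases I would write the two members of $\{n,a,n-a\}$ that are powers of $2$ as $2^p$ and $2^q$ (with $a=2^p$) and substitute into $(n-2a)^2=3n-8$. For the smallest exponent gap this reduces to a quadratic $y^2=cy-c'$ in $y=2^p=a$: the subcase $3\mid n$ then yields $a=8$, $n=24$, the subcase $3\mid(n-a)$ yields only $a\leq 2$ (excluded by $a\geq 5$), and every larger gap is ruled out because $(n-2a)^2$ outgrows $3n-8$. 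Therefore $(n,a)=(24,8)$ and $|C|=2^{12}$, so $C$ is a binary $[24,12,8]$ code and, by the classical uniqueness of such codes, $C=\mathcal{G}_{24}$.

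I expect the Diophantine step to be the main obstacle. The relation $(n-2a)^2=3n-8$ alone has infinitely many integer solutions, so it is only its coupling with the power-of-two constraint on $|C|$ --- exploited through the $3$-smoothness case split and the growth estimate above --- that isolates $n=24$; making this elimination airtight (especially the uniform exclusion of all large exponent gaps) is where the care is needed.
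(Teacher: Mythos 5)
Your proposal is correct, and its first two stages coincide with the paper's: the spectrum $\{0,d,n/2,n-d,n\}$, the three MacWilliams conditions $B_2=B_4=B_6=0$ viewed as linear constraints on the two unknowns $(\alpha,\beta)=(A_d,A_{n/2})$, and the resulting consistency relation $(n-2d)^2=3n-8$, which is exactly the paper's $n^{2}-(4d+3)n+4d^{2}+8=0$ from Proposition \ref{pro:d^perp=8}, with $\alpha,\beta,d$ then determined as in Lemma \ref{lem:d^perp=6} (3). The decisive arithmetic step is where you genuinely differ. The paper writes $n=(m^2+8)/3$ and uses integrality of $\alpha=n^2(n^2-3n+2)/(6(3n-8))$: clearing denominators leaves $640/m^2\in\ZZ$, so $m\in\{1,2,4,8\}$, and $n>8$ gives $n=24$ immediately. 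You instead exploit $|C|=2^k$ via $|C|=4nd(n-d)/3$ (which is indeed equal to $2+2\alpha+\beta$ once $(n-2d)^2=3n-8$ holds, since then $4d(n-d)=n^2-3n+8$), forcing $n$, $d$, $n-d$ to be $3$-smooth with exactly one of them carrying a single factor of $3$. Your case split then closes cleanly: in each surviving case the $3$-divisibility already pins the exponent gap ($n-d=2d$, resp.\ $n=4d$), so only the quadratic in $2^p$ remains and the difficulty you anticipate with ``larger gaps'' does not arise --- there are none to exclude. Both endgames are elementary and sound; the paper's is a one-line divisibility check, while yours avoids needing the closed form of $\alpha$ at the cost of a short smoothness analysis. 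One detail to tighten: the Assmus--Mattson theorem gives that $D_u$ is a $5$-design only for $d\le u\le n-5$, so to see that all nonzero weights below $n$ lie in $[5,n-5]$ you should first get $d\ge 5$ from $D_d$ being a nonempty $5$-design and then exclude weights in $(n-5,n)$ by complementation, rather than applying the design property to every weight at once.
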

It is interesting to note that 
this theorem provides a new characterization of 
the extended Golay code $\mathcal{G}_{24}$.


This paper is organized as follows. 
In Section~\ref{sec:pre}, we give background material and terminology.
We review the concept of 
harmonic weight enumerators and some theorems of designs, 
which are used in the proof of the 
main results. 
In Section~\ref{sec:d-t=1,2}, 
we state for cases $d^{\bot}-t=1$ and $2$. 
In Section~\ref{sec:d-t=3}, 
we give the proof of theorem~\ref{thm:main1} (3) for case $d^{\bot}-t=3$.
We prove theorems~\ref{thm:main3} (2) and \ref{thm:main4} in Section~\ref{sec:main3}.
Finally, in Section~\ref{sec:rem}, 
we conclude with some remarks.

All computer calculations in this paper were done with the help of 
Mathematica \cite{Mathematica}.


\section{Preliminaries}\label{sec:pre}

\subsection{Background material and terminology}\label{sec:terminology}

Let $\GF_q$ be the finite field of $q$ elements. 
A binary linear code $C$ of length $n$ is a subspace of $\GF_2^n$. 
An inner product $({x},{y})$ on $\FF_2^n$ is given 
by
\[
(x,y)=\sum_{i=1}^nx_iy_i,
\]
for all $x = (x_1,\ldots,x_n)$ and $y = (y_1,\ldots,y_n) \in \mathbb{F}_2^n$. 
The duality of a linear code $C$ is defined as follows: 
\[
C^{\perp}=\{{y}\in \FF_{2}^{n}\ | \ ({x},{y}) =0\ \mbox{ for all }{x}\in C\}.
\]
A linear code $C$ is self-dual 
if $C=C^{\perp}$. 
For $x \in\FF_2^n$,
the weight $\wt(x)$ is the number of its nonzero components. 
The minimum distance of code $C$ is 
$\min\{\wt( x)\mid  x \in C,  x \neq  0 \}$. 
A linear code of length $n$, dimension $k$, and 
minimum distance $d$ is called an $[n,k,d]$ code 
(or $[n,k]$ code) and 
the dual code is called an $[n,n-k,d^{\perp}]$ code. 
{

A $t$-$(v,k,{\lambda})$ design (or $t$-design for short) is a pair 
$\mathcal{D}=(X,\mathcal{B})$, where $X$ is a set of points of 
cardinality $v$, and $\mathcal{B}$ is a collection of $k$-element subsets
of $X$ called blocks, with the property that any $t$ points are 
contained in precisely $\lambda$ blocks.

The support of a nonzero vector ${x}:=(x_{1}, \dots, x_{n})$, 
$x_{i} \in \GF_{2} = \{ 0,1 \}$ is 
the set of indices of its nonzero coordinates: ${\rm supp} ({ x}) = \{ i \mid x_{i} \neq 0 \}$\index{$supp (x)$}. 
The support design of a code of length $n$ for a given nonzero weight $w$ is the design 
with points $n$ of coordinate indices and whose blocks are the supports of all codewords of weight $w$.

The following theorem is from Assmus and Mattson \cite{assmus-mattson}. It is one of the 
most important theorems in coding theory and design theory:
\begin{Thm}[\cite{assmus-mattson}] \label{thm:assmus-mattson}
Let $C$ be a binary $[n,k,d]$ linear code and $C^{\bot}$ be the $[n,n-k,d^{\bot}]$ dual code. 
Let $t$ be an integer less than $d$.  
Let $C$ have at most $d^{\bot}-t$ non-zero weights less than or equal to $n-t$. Then, 
for each weight $u$ with $d \leq u \leq n-t$, the support design in $C$ is a $t$-design, 
and for each weight $w$ with $d^{\bot} \leq w \leq n$, the support design 
in $C^{\bot}$ is a $t$-design.

\end{Thm}

\subsection{The harmonic weight enumerators}\label{sec:weight enumerators}

In this section, we review the concept of 
harmonic weight enumerators. 

Let $C$ be a code of length $n$. The weight distribution of code $C$ 
is the sequence $\{A_{i}\mid i=0,1, \dots, n \}$, where $A_{i}$ is the number of codewords of weight $i$. 
The polynomial
$$W_C(x ,y) = \sum^{n}_{i=0} A_{i} x^{n-i} y^{i}$$
is called the weight enumerator of $C$.
The weight enumerator of code $C$ and its dual $C^{\perp}$ are related. 
The following theorem, proposed by MacWilliams, is called the MacWilliams identity:
\begin{Thm}[\cite{mac}]\label{thm: macwilliams iden.} 
Let $W_C(x ,y)$ be the weight enumerator of an $[n,k]$ code $C$ over $\GF_{q}$ and let $W_{C^\perp}(x ,y)$ be 
the weight enumerator of the dual code $C^\perp$. Then
$$W_{C^\perp} (x ,y)= q^{-k} W_C(x+(q-1)y,x-y).$$
\end{Thm}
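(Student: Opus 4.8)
The plan is to pin down the weight enumerator of $C$ from the hypotheses, then show that it forces the parameters $[n,k,d]=[24,12,8]$, and finally to invoke the uniqueness of the binary code with these parameters.

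First I would record the structural consequences of the hypothesis. Since $\mathbf 1_n\in C$, the weight distribution is symmetric, $A_u=A_{n-u}$, and by condition (\ref{eqn:AM}) with $(d^{\perp},t)=(8,5)$ there are exactly three nonzero weights $u$ with $0<u\le n-5$. By Theorem \ref{thm:assmus-mattson}(1) the minimum-weight codewords form a nontrivial $5$-design whose blocks have size $d$; as a nontrivial $5$-design has block size at least $5$, this gives $d\ge 5$. Consequently each of the three weights lies in $[d,n-5]\subseteq[5,n-5]$, so this set is closed under $u\mapsto n-u$ and must consist of one complementary pair together with the unique fixed weight $n/2$. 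Hence the nonzero weights of $C$ are exactly $\{d,\,n/2,\,n-d,\,n\}$, and writing $A:=A_d=A_{n-d}$ and $B:=A_{n/2}$,
\[
W_C(x,y)=x^{n}+A\bigl(x^{\,n-d}y^{d}+x^{d}y^{\,n-d}\bigr)+B\,x^{n/2}y^{n/2}+y^{n},
\]
with $2^{k}=2+2A+B$.

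Next I would apply the MacWilliams identity (Theorem \ref{thm: macwilliams iden.}) and impose $d^{\perp}=8$, i.e.\ that the coefficients of $y^{1},\dots,y^{7}$ in $W_{C^{\perp}}$ vanish. Expressing these coefficients through Krawtchouk polynomials $K_j$, the palindromic symmetry of $W_C$ makes every odd-index condition hold automatically, because $K_j(0)+K_j(n)=0$, $K_j(d)+K_j(n-d)=0$, and $K_j(n/2)=0$ for odd $j$. The even conditions $j=2,4,6$ give three linear equations in the two unknowns $A,B$; eliminating $A,B$ from the $j=2,4$ equations and substituting into the $j=6$ equation yields a single Diophantine relation $g(n,d)=0$. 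Solving the $j=2,4$ equations already expresses $A,B$ — and hence, via MacWilliams, the entire weight distribution of $C^{\perp}$ — as explicit functions of $(n,d)$.

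The decisive step is then to run through the remaining integrality and positivity constraints: $A,B\in\Z_{\ge 0}$, with $2+2A+B$ a power of $2$, $A_8^{\perp}>0$ (the dual distance is exactly $8$), and $A_j^{\perp}\in\Z_{\ge 0}$ for all $j$. I expect this to be the main obstacle, namely showing that these conditions, restricted to the curve $g(n,d)=0$, admit $(n,d)=(24,8)$ as their only solution; this is the computation carried out in Mathematica. At that solution one reads off $A=759$, $B=2576$, $k=12$, so $C$ is a binary $[24,12,8]$ code with the weight enumerator of $\mathcal G_{24}$. Since the binary $[24,12,8]$ code is unique up to equivalence — equivalently, since the weight-$8$ supports form the unique Steiner system $S(5,8,24)$ furnished by Theorem \ref{thm:assmus-mattson}(2) — I conclude $C=\mathcal G_{24}$.

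A secondary point requiring care is the borderline case in the first step: ruling out $d<5$ rigorously, i.e.\ excluding degenerate support designs with $\lambda=0$, and confirming that the three relevant weights cannot collapse, so that the four-weight form of $W_C$ genuinely holds before the MacWilliams computation begins.
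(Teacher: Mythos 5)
Your proposal does not prove the statement it was assigned. The statement is Theorem \ref{thm: macwilliams iden.}, the classical MacWilliams identity, which the paper quotes from \cite{mac} without proof; what you have written is instead a proof sketch of Theorem \ref{thm:main4}, the characterization of the extended Golay code $\mathcal{G}_{24}$. The mismatch is fatal in a specific way: your argument explicitly \emph{invokes} the MacWilliams identity (``Next I would apply the MacWilliams identity (Theorem \ref{thm: macwilliams iden.})\dots''), so read as a proof of that identity it is circular, and read literally it establishes a different theorem. A genuine proof of the target statement needs the standard character-sum (Poisson summation) argument: for a nontrivial additive character $\chi$ of $\GF_q$, one has $\sum_{c\in C}\chi((c,v))=|C|$ if $v\in C^{\perp}$ and $0$ otherwise, whence
\[
|C|\,W_{C^{\perp}}(x,y)=\sum_{v\in\GF_q^{n}}\Bigl(\sum_{c\in C}\chi((c,v))\Bigr)x^{n-\wt(v)}y^{\wt(v)}
=\sum_{c\in C}\prod_{i=1}^{n}\Bigl(\sum_{a\in\GF_q}\chi(c_i a)\,x^{[a=0]}y^{[a\neq 0]}\Bigr),
\]
and evaluating the inner sum as $x+(q-1)y$ when $c_i=0$ and $x-y$ when $c_i\neq 0$ gives $W_{C^{\perp}}(x,y)=q^{-k}W_C(x+(q-1)y,\,x-y)$. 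None of this machinery appears in your proposal.

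Even judged as a blind proof of Theorem \ref{thm:main4}, the sketch has a gap at its decisive step: you defer the elimination of all parameters other than $(n,d)=(24,8)$ to an unspecified Mathematica search over the curve $g(n,d)=0$, conceding it is ``the main obstacle.'' The paper closes exactly this point rigorously via Lemma \ref{lem:d^perp=6}(3): the equations force $n=(m^{2}+8)/3$ and the integrality of $\alpha$ forces $640/m^{2}\in\ZZ$, so $m\in\{1,2,4,8\}$, and $n>d^{\perp}=8$ leaves only $n=24$ with $\alpha=759$, $\beta=2576$, $k=12$, after which uniqueness of the binary $[24,12,8]$ code concludes. So the finite, checkable reduction you hoped the computer would supply is precisely what the paper's argument provides in closed form — but in any case, neither version is a proof of the MacWilliams identity, which is what was asked.
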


A striking generalization of the MacWilliams identity 
was given by Bachoc \cite{Bachoc}, 
who proposed the concept of harmonic weight enumerators and 
a generalization of the MacWilliams identity. 
The harmonic weight enumerators have many applications; 
in particular, the relations between coding theory and 
design theory are reinterpreted and progressed by the harmonic weight 
enumerators \cite {Bachoc,Bannai-Koike-Shinohara-Tagami}. 
For the reader's convenience, we quote 
the definitions and properties of discrete harmonic functions from \cite{Bachoc,Delsarte}. 

Let $\Omega=\{1, 2,\ldots,n\}$ be a finite set (which will be the set of coordinates of the code) and 
let $X$ be the set of its subsets, while for all $k= 0,1, \ldots, n$, $X_{k}$ is the set of its $k$-subsets.
We denote by $\R X$, $\R X_k$ the free real vector spaces spanned by, respectively, the elements of $X$, $X_{k}$. 
An element of $\R X_k$ is denoted by
$$f=\sum_{z\in X_k}f(z)z$$
and is identified with the real-valued function on $X_{k}$ given by 
$z \mapsto f(z)$. 

Such an element $f\in \R X_k$ can be extended to an element $\widetilde{f}\in \R X$ by setting, for all $u \in X$,
$$\widetilde{f}(u)=\sum_{z\in X_k, z\subset u}f(z).$$
If an element $g \in \R X$ is equal to some $\widetilde{f}$, for $f \in \R X_{k}$, we say that $g$ has degree $k$. 
The derivative $\gamma$ is the operator defined by linearity from 
$$\gamma(z) =\sum_{y\in X_{k-1},y\subset z}y$$
for all $z\in X_k$ and for all $k=0,1, \ldots n$, and $\Harm_{k}$ is the kernel of $\gamma$:
$$\Harm_k =\ker(\gamma|_{\R X_k}).$$

\begin{Thm}[{{\cite[Theorem 7]{Delsarte}}}]\label{thm:design}
A set $\mathcal{B} \subset X_{m}$, where $m \leq n$ of blocks is a $t$-design 
if and only if $\sum_{b\in \mathcal{B}}\widetilde{f}(b)=0$ 
for all $f\in \Harm_k$, $1\leq k\leq t$. 
\end{Thm}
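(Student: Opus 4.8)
The plan is to translate the $t$-design condition into a statement about the function that counts blocks through a fixed subset, and to match this against the harmonic orthogonality condition. For a $k$-subset $z\in X_k$ write $\lambda_k(z):=\#\{b\in\mathcal{B}\mid z\subset b\}$ and regard $\lambda_k$ as an element of $\R X_k$. Exchanging the order of summation gives, for every $f\in\R X_k$,
\[
\sum_{b\in\mathcal{B}}\widetilde f(b)=\sum_{b\in\mathcal{B}}\sum_{z\subset b}f(z)=\sum_{z\in X_k}f(z)\,\lambda_k(z)=\langle f,\lambda_k\rangle,
\]
so the condition $\sum_{b}\widetilde f(b)=0$ for all $f\in\Harm_k$ is exactly the assertion that $\lambda_k$ is orthogonal to $\Harm_k$. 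On the other hand, $\mathcal{B}$ is a $t$-design precisely when $\lambda_t$ is a constant function, and a $t$-design is automatically an $s$-design for every $s\le t$, i.e. each $\lambda_s$ with $s\le t$ is constant. Thus the theorem reduces to the equivalence: $\lambda_k\perp\Harm_k$ for all $1\le k\le t$ if and only if $\lambda_t$ is constant.

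First I would record two elementary counting identities for the lowering operator $\gamma$, both obtained by double counting chains $y\subset z\subset b$. For $y\in X_{k-1}$ one has $(\gamma\lambda_k)(y)=\sum_{z\supset y}\lambda_k(z)=(m-k+1)\lambda_{k-1}(y)$, and likewise $\gamma\mathbf{1}_k=(n-k+1)\mathbf{1}_{k-1}$, where $\mathbf{1}_k\in\R X_k$ denotes the all-ones function. I also need that the constant function is orthogonal to $\Harm_k$ for $k\ge 1$: this holds because iterating $\gamma$ all the way down to $X_0$ multiplies the total mass, so $\langle\mathbf{1}_k,f\rangle$ is proportional to the $X_0$-coefficient of $\gamma^k f$, which vanishes whenever $\gamma f=0$. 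Granting these, the forward implication is immediate: if $\mathcal{B}$ is a $t$-design then each $\lambda_k$ with $k\le t$ is a constant multiple of $\mathbf{1}_k$, whence $\langle f,\lambda_k\rangle=0$ for all $f\in\Harm_k$.

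For the converse I would argue by induction on $t$. The case $t=1$ is direct: $\Harm_1$ is the hyperplane of sum-zero functions in $\R X_1$, its orthogonal complement is spanned by $\mathbf{1}_1$, so $\lambda_1\perp\Harm_1$ forces $\lambda_1$ constant, i.e. $\mathcal{B}$ is a $1$-design. Assume the equivalence for $t-1$. Given $\lambda_k\perp\Harm_k$ for all $1\le k\le t$, the inductive hypothesis applied to the first $t-1$ conditions shows $\lambda_{t-1}=c\,\mathbf{1}_{t-1}$ for some constant $c$. The recursion then gives $\gamma\lambda_t=(m-t+1)\lambda_{t-1}=(m-t+1)c\,\mathbf{1}_{t-1}$, so the corrected function
\[
\mu:=\lambda_t-\frac{(m-t+1)c}{\,n-t+1\,}\,\mathbf{1}_t
\]
satisfies $\gamma\mu=0$, that is $\mu\in\Harm_t$. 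But $\lambda_t\perp\Harm_t$ by hypothesis and $\mathbf{1}_t\perp\Harm_t$ by the previous paragraph, so $\mu\perp\Harm_t$ as well; hence $\mu=0$ and $\lambda_t$ is constant, proving $\mathcal{B}$ is a $t$-design.

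The conceptual heart of the argument is this induction step, and the point most needing care is that one cannot conclude anything about $\lambda_t$ from $\lambda_t\perp\Harm_t$ alone: it is essential first to know that $\lambda_{t-1}$ is already constant, since that is what pins down $\gamma\lambda_t$ and permits the single harmonic correction $\mathbf{1}_t$ to be subtracted off. The remaining care concerns degenerate ranges, namely that the scalars $m-k+1$ be nonzero, which holds for $k\le t\le m$ (and if $t>m$ every $\lambda_k$ with $k>m$ vanishes identically, so the statement is trivial there). A more structural alternative, which I would keep in reserve, is to invoke the orthogonal decomposition $\R X_k=\bigoplus_{j\ge 0}(\gamma^{*})^{k-j}\Harm_j$ of the Johnson scheme, where $\gamma^{*}$ is the raising operator adjoint to $\gamma$, together with the fact that $\gamma$ restricts to isomorphisms between successive $j$-components; then $\lambda_k\perp\Harm_k$ for all $k\le t$ kills every component of $\lambda_t$ except the $j=0$ one, again forcing $\lambda_t$ constant. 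The inductive route is preferable here, since it uses only the two counting identities rather than the full representation theory.
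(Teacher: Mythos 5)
Your proof is correct. Note, however, that the paper contains no proof of this statement to compare against: it is quoted verbatim as Theorem 7 of Delsarte's paper, so the only meaningful comparison is with Delsarte's original argument, which works inside the Johnson association scheme and rests on the orthogonal decomposition $\R X_k=\bigoplus_{j}(\gamma^{*})^{k-j}\Harm_j$ together with the fact that $\gamma$ acts injectively on raised harmonic components --- that is, essentially the ``structural alternative'' you keep in reserve at the end. Your inductive route is genuinely more elementary: it uses only the identification $\sum_{b\in\mathcal{B}}\widetilde f(b)=\langle f,\lambda_k\rangle$, the two double-counting identities $\gamma\lambda_k=(m-k+1)\lambda_{k-1}$ and $\gamma\mathbf{1}_k=(n-k+1)\mathbf{1}_{k-1}$, and the orthogonality of constants to $\Harm_k$, all of which you verify correctly. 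The key step --- using the inductively established constancy of $\lambda_{t-1}$ to pin down $\gamma\lambda_t$, so that subtracting the single correction $\frac{(m-t+1)c}{n-t+1}\mathbf{1}_t$ places $\mu$ in $\Harm_t$, where the two orthogonality facts force $\langle\mu,\mu\rangle=0$ --- is sound, and you rightly identify it as the point where $\lambda_t\perp\Harm_t$ alone would be insufficient. The scalars $m-k+1$ and $n-t+1$ are nonzero in the relevant range $t\le m\le n$, and you correctly dispose of the degenerate case $t>m$. What Delsarte's decomposition buys is finer information (design strength read off component by component, leading to Fischer-type bounds); what your induction buys is a short, self-contained proof needing no representation theory, which is well suited to the way the theorem is actually used in this paper (vanishing of specific coefficients of $W_{C^{\perp},f}$ certifying that specific $D^{\perp}_w$ are designs).
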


In \cite{Bachoc}, the harmonic weight enumerator associated with a binary linear code $C$ was defined as follows: 
\begin{Def}
Let $C$ be a binary code of length $n$ and let $f\in\Harm_{k}$. 
The harmonic weight enumerator associated with $C$ and $f$ is

$$W_{C,f}(x,y)=\sum_{{c}\in C}\widetilde{f}({c})x^{n-\wt({c})}y^{\wt({c})}.$$
\end{Def}

Bachoc proved the following MacWilliams-type equality: 
\begin{Thm}[\cite{Bachoc}] \label{thm: Bachoc iden.} 
Let $W_{C,f}(x,y)$ be 
the harmonic weight enumerator associated with the code $C$ 
and the harmonic function $f$ of degree $k$. Then 
$$W_{C,f}(x,y)= (xy)^{k} Z_{C,f}(x,y)$$
where $Z_{C,f}$ is a homogeneous polynomial of degree $n-2k$, and satisfies
$$Z_{C^{\bot},f}(x,y)= (-1)^{k} \frac{2^{n/2}}{|C|} Z_{C,f} \left( \frac{x+y}{\sqrt{2}}, \frac{x-y}{\sqrt{2}} \right).$$
\end{Thm}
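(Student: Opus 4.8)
The plan is to derive both assertions — the factorization $W_{C,f}=(xy)^kZ_{C,f}$ and the transformation law for $Z_{C,f}$ — from a single source: the behaviour under the discrete Fourier transform on $\FF_2^n$ of the weighted monomial $g_f(v)=\widetilde f(v)\,x^{n-\wt(v)}y^{\wt(v)}$, exactly as in the character‐sum proof of the MacWilliams identity (Theorem~\ref{thm: macwilliams iden.}). Here I write $\widetilde f(v)=\sum_{z\in X_k}f(z)\prod_{i\in z}v_i$ for $v\in\FF_2^n$, which agrees with $\widetilde f(\supp(v))$, so that $W_{C,f}(x,y)=\sum_{c\in C}g_f(c)$. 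The only code‑independent inputs needed are a vanishing lemma for $\widetilde f$ and an evaluation of $\widehat{g_f}$, both consequences of $f\in\Harm_k$.

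First I would record the iterated harmonic identity: if $f\in\Harm_k$ then $\sum_{z\supseteq R,\,|z|=k}f(z)=0$ for every $R$ with $|R|\le k-1$. This follows by downward induction on $|R|$ from $\gamma(f)=0$, summing the relation at level $|R|+1$ over the $(|R|+1)$‑sets containing $R$ and counting multiplicities. From it I obtain the vanishing lemma $\widetilde f(u)=0$ unless $k\le\wt(u)\le n-k$: for $\wt(u)<k$ there is no $k$‑subset of $\supp(u)$, while for $\wt(u)>n-k$ I expand $\widetilde f(u)=\sum_{z\cap T=\emptyset}f(z)$ (with $T=\supp(u)^{c}$, $|T|\le k-1$) by inclusion–exclusion as $\sum_{R\subseteq T}(-1)^{|R|}\sum_{z\supseteq R}f(z)$ and invoke the iterated identity, since every $R\subseteq T$ has $|R|\le k-1$. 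Because every monomial of $W_{C,f}$ now satisfies $k\le\wt(c)\le n-k$, both $x^k$ and $y^k$ divide $W_{C,f}$, giving $W_{C,f}=(xy)^kZ_{C,f}$ with $Z_{C,f}$ homogeneous of degree $n-2k$.

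For the transformation law I would apply Poisson summation, $\sum_{c\in C^\perp}g_f(c)=\tfrac{1}{|C|}\sum_{u\in C}\widehat{g_f}(u)$ with $\widehat{g_f}(u)=\sum_{v}(-1)^{(u,v)}g_f(v)$, and evaluate coordinatewise. Fixing $z\in X_k$, the factor $\prod_{i\in z}v_i$ forces $v_i=1$ on $z$, so each coordinate of $z$ contributes $(-1)^{u_i}y$ while each coordinate outside $z$ contributes $x+(-1)^{u_j}y$; with $S=\supp(u)$ and $a=\wt(u)$ this gives
\begin{equation*}
\widehat{g_f}(u)=y^k\sum_{z\in X_k}f(z)(-1)^{|z\cap S|}(x+y)^{\,n-k-a+|z\cap S|}(x-y)^{\,a-|z\cap S|}.
\end{equation*}
The crux is that this sum collapses. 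I would prove the key identity
\begin{equation*}
\sum_{z\in X_k}f(z)\,t^{|z\cap S|}=\widetilde f(S)\,(t-1)^k ,
\end{equation*}
by writing $t^{|z\cap S|}=\sum_{R\subseteq z\cap S}(t-1)^{|R|}$, exchanging the order of summation, and using the iterated harmonic identity to annihilate every $\sum_{z\supseteq R}f(z)$ except the term $|R|=k$ (which leaves $f(R)$ with $R\subseteq S$). Reading off coefficients gives $\sum_{|z\cap S|=b}f(z)=(-1)^{k-b}\binom{k}{b}\widetilde f(u)$; substituting this into the display, factoring out $(x+y)^{\,n-k-a}(x-y)^{\,a-k}$, and summing by the binomial theorem $\big((x+y)+(x-y)\big)^k=(2x)^k$ yields
\begin{equation*}
\widehat{g_f}(u)=(-2)^k(xy)^k\,\widetilde f(u)\,(x+y)^{\,n-k-\wt(u)}(x-y)^{\,\wt(u)-k},
\end{equation*}
the vanishing lemma guaranteeing the exponents are nonnegative wherever $\widetilde f(u)\ne0$.

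Finally I would substitute this evaluation into Poisson summation. Writing the left side as $W_{C^\perp,f}=(xy)^kZ_{C^\perp,f}$ and cancelling the common $(xy)^k$ leaves
\begin{equation*}
Z_{C^\perp,f}(x,y)=\frac{(-2)^k}{|C|}\sum_{u\in C}\widetilde f(u)\,(x+y)^{\,n-k-\wt(u)}(x-y)^{\,\wt(u)-k}.
\end{equation*}
Each summand has total degree $n-2k$, so $(x+y)^m(x-y)^\ell=2^{(m+\ell)/2}\big(\tfrac{x+y}{\sqrt2}\big)^m\big(\tfrac{x-y}{\sqrt2}\big)^\ell$ extracts a factor $2^{(n-2k)/2}$, and $(-2)^k2^{(n-2k)/2}=(-1)^k2^{n/2}$; the remaining sum is $Z_{C,f}\big(\tfrac{x+y}{\sqrt2},\tfrac{x-y}{\sqrt2}\big)$, since dividing $W_{C,f}=\sum_u\widetilde f(u)x^{n-\wt u}y^{\wt u}$ by $(xy)^k$ identifies $Z_{C,f}(x,y)=\sum_u\widetilde f(u)x^{\,n-k-\wt u}y^{\,\wt u-k}$. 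This is precisely the claimed identity. The main obstacle I anticipate is not conceptual but the bookkeeping in the coordinatewise Fourier computation and its resummation — keeping the exponents of $x+y$ and $x-y$, the signs $(-1)^{|z\cap S|}$, and the forced factor $y^k$ straight — since the harmonic hypothesis enters solely through the single identity $\sum_z f(z)t^{|z\cap S|}=\widetilde f(S)(t-1)^k$.
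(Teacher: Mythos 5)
The paper offers no proof of this statement at all: it is quoted verbatim from Bachoc \cite{Bachoc} as a known result, so there is no internal argument to compare against, and your proposal must stand on its own. It does: your proof is correct, and it is in essence the classical character-sum/Poisson-summation proof of this MacWilliams-type identity (the same circle of ideas as Bachoc's original paper and Delsarte \cite{Delsarte}). The individual steps all check: the downward induction giving $\sum_{z\supseteq R,\,|z|=k}f(z)=0$ for $|R|\le k-1$ is sound, since summing the level-$(r+1)$ relations over $R'\supset R$ counts each $z\supseteq R$ exactly $k-r\ne 0$ times; the resulting vanishing of $\widetilde f(u)$ outside $k\le\wt(u)\le n-k$ (via inclusion--exclusion over $T=\supp(u)^{c}$) legitimately yields the factorization $W_{C,f}=(xy)^{k}Z_{C,f}$ with $Z_{C,f}$ homogeneous of degree $n-2k$; the coordinatewise evaluation of $\widehat{g_f}(u)$ is right (coordinates in $z$ contribute $(-1)^{u_i}y$, the rest contribute $x+(-1)^{u_j}y$); the collapse identity $\sum_{z}f(z)\,t^{|z\cap S|}=\widetilde f(S)(t-1)^{k}$ follows exactly as you say from $t^{|z\cap S|}=\sum_{R\subseteq z\cap S}(t-1)^{|R|}$ and the iterated harmonic identity, with only the $|R|=k$ terms surviving; and the final bookkeeping $(-2)^{k}2^{(n-2k)/2}=(-1)^{k}2^{n/2}$, together with cancelling $(xy)^{k}$ in the integral domain $\R[x,y]$, delivers precisely the stated transformation law with the stated normalization. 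The one genuinely delicate point, the formal factor $(x-y)^{\wt(u)-k}$ with a negative exponent when $\wt(u)<k$, you neutralize correctly: for such $u$ every coefficient $\sum_{|z\cap S|=b}f(z)=(-1)^{k-b}\binom{k}{b}\widetilde f(u)$ vanishes, so those summands are zero before any factoring is performed. What your write-up buys, compared with the paper's bare citation, is a self-contained derivation in which the harmonic hypothesis enters through a single transparent lemma; I have no corrections, only the remark that one could note explicitly that $u=\mathbf{0}$ and $u=\mathbf{1}_n$ are covered by the same vanishing lemma when $k\ge 1$, which your inequalities already imply.
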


\subsection{Some theorems of designs}\label{sec:design}

Herein, 
we give some theorems that will be used in the proof of the main theorems. 

Let $\mathcal{D}=(X,\mathcal{B})$ be a $t$-$(v,k,\lambda)$ design.
The derived design $\mathcal{D}_{p}$ of $\mathcal{D}$ with respect to the point $p \in X$ 
has point set $X \setminus \{ p \}$ and block set $\{ B \setminus \{ p \} \mid B \in \mathcal{B}, p \in B \}$.
It is a $(t-1)$-$(v-1,k-1,\lambda)$ design.
The complementary design of $\mathcal{D}$ is $\overline{\mathcal{D}}=(X, \overline{\mathcal{B}})$, 
where $\overline{\mathcal{B}}= \{ X \setminus B \mid B \in \mathcal{B} \}$.
If $\mathcal{D}= \overline{\mathcal{D}}$, $\mathcal{D}$ is called a self-complementary design.
Let $D_{n/2}$ be the support $t$-design of the middle weight of 
a code of length $n$. 
It is easily seen that $D_{n/2}$ is self-complementary.

Alltop \cite{alltop} proved the following theorem.
\begin{Thm}[\cite{alltop}]\label{thm:alltop}
If $\mathcal{D}$ is a $t$-design for an even integer $t$ and 
is self-complementary, 
then $\mathcal{D}$ is also a $(t+1)$-design.
\end{Thm}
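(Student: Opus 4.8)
The plan is to prove the statement via Delsarte's harmonic characterization of designs (Theorem \ref{thm:design}), exploiting the self-complementary hypothesis together with the parity of $t$. Write $\mathcal{D}=(X,\mathcal{B})$. Since $\mathcal{D}$ is already a $t$-design, Theorem \ref{thm:design} gives $\sum_{b\in\mathcal{B}}\widetilde{f}(b)=0$ for every $f\in\Harm_k$ with $1\le k\le t$; to upgrade $\mathcal{D}$ to a $(t+1)$-design it suffices to establish the same vanishing for every $f\in\Harm_{t+1}$. The strategy is to show that, for harmonic $f$, the self-complementarity of $\mathcal{B}$ forces $\sum_b\widetilde{f}(b)$ to equal a signed copy of itself, the sign being $(-1)^{\deg f}$.

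The technical heart is the following lemma on the behavior of $\widetilde{f}$ under complementation: for $f\in\Harm_k$ and any $u\in X$,
\[
\widetilde{f}(X\setminus u)=(-1)^k\,\widetilde{f}(u).
\]
To prove it, I would start from the explicit form $\widetilde{f}(X\setminus u)=\sum_{z\in X_k,\,z\cap u=\emptyset}f(z)$ and expand the disjointness condition by inclusion--exclusion, obtaining $\widetilde{f}(X\setminus u)=\sum_{S\subseteq u}(-1)^{|S|}g(S)$, where $g(S):=\sum_{z\in X_k,\,z\supseteq S}f(z)$. The harmonic condition $\gamma f=0$ says precisely that $g(y)=0$ for every $(k-1)$-subset $y$; combining this with the averaging identity $(k-|S|)\,g(S)=\sum_{x\notin S}g(S\cup\{x\})$ and inducting downward on $|S|$ shows $g(S)=0$ for all $|S|<k$. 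Only the terms with $|S|=k$ survive, and these reassemble into $(-1)^k\widetilde{f}(u)$, giving the lemma.

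With the lemma in hand the theorem follows quickly. Self-complementarity means $\mathcal{B}=\overline{\mathcal{B}}=\{X\setminus b\mid b\in\mathcal{B}\}$ as collections, so reindexing the sum along the bijection $b\mapsto X\setminus b$ and applying the lemma with $k=t+1$ yields
\[
\sum_{b\in\mathcal{B}}\widetilde{f}(b)=\sum_{b\in\mathcal{B}}\widetilde{f}(X\setminus b)=(-1)^{t+1}\sum_{b\in\mathcal{B}}\widetilde{f}(b).
\]
Because $t$ is even, $(-1)^{t+1}=-1$, whence $2\sum_b\widetilde{f}(b)=0$ and the sum vanishes for all $f\in\Harm_{t+1}$; Theorem \ref{thm:design} then identifies $\mathcal{D}$ as a $(t+1)$-design. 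I expect the main obstacle to be the complementation lemma---specifically verifying that $g(S)$ vanishes for all $|S|<k$, which is where harmonicity is genuinely used; the parity hypothesis on $t$ enters only at the very last step, exactly to prevent the two copies of the sum from cancelling into a vacuous identity. As a cross-check, I would note that the same conclusion can be reached by a purely combinatorial count of the blocks through a fixed $(t+1)$-set, where inclusion--exclusion together with the complement bijection again produces a factor $(-1)^t=1$ that isolates the number of blocks through a $(t+1)$-set as a constant independent of the chosen set.
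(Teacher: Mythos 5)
Your proof is correct, but there is nothing in the paper to compare it against: Theorem \ref{thm:alltop} is imported from Alltop \cite{alltop} without proof, so your argument should be judged as a free-standing alternative to Alltop's original one. Alltop's own proof is purely combinatorial: for a $(t+1)$-subset $T$, inclusion--exclusion expresses the number of blocks disjoint from $T$ as $\sum_{j=0}^{t}(-1)^{j}\binom{t+1}{j}\lambda_{j}+(-1)^{t+1}\lambda_{T}$, the complementation bijection identifies this count with $\lambda_{T}$, and for even $t$ one gets $2\lambda_{T}=c$ with $c$ independent of $T$. Your route instead verifies Delsarte's criterion (Theorem \ref{thm:design}) in degree $t+1$, and the technical lemma it rests on is sound as you set it up: inclusion--exclusion gives $\widetilde{f}(X\setminus u)=\sum_{S\subseteq u}(-1)^{|S|}g(S)$ with $g(S)=\sum_{z\supseteq S}f(z)$; harmonicity of $f$ is exactly the vanishing of $g$ on $(k-1)$-sets; and your averaging identity $(k-|S|)\,g(S)=\sum_{x\notin S}g(S\cup\{x\})$ propagates that vanishing downward, leaving only the $|S|=k$ terms $g(S)=f(S)$, whence $\widetilde{f}(X\setminus u)=(-1)^{k}\widetilde{f}(u)$. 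The harmonic route buys something structural that the counting proof hides: it shows that for \emph{any} self-complementary block collection the Delsarte sums vanish in every odd degree automatically, so the hypothesis ``$t$ even'' enters only because $t+1$ is then odd; it also keeps the argument inside the harmonic machinery the paper uses everywhere else (e.g.\ in Lemmas \ref{lem:1 or 3}--\ref{lem:not t+1} and Theorem \ref{thm:main3}). One small slip, confined to your closing cross-check: in the combinatorial version the operative sign is $(-1)^{t+1}=-1$, yielding $\lambda_{T}=c-\lambda_{T}$ and hence $2\lambda_{T}=c$; your phrase ``a factor $(-1)^{t}=1$'' misstates this bookkeeping, though it does not affect the main proof.
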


The block intersection numbers of $\mathcal{D}$ are 
the numbers of points contained in any two blocks of $\mathcal{D}$.
The following theorem is obtained by known results, for example, see 
\cite[Theorem 1.15, Theorem 1.52, Theorem 1.54, Proposition 5.7]{CL}.

\begin{Thm}\label{thm:S=1,2,3}
Let $S$
be the block intersection numbers of $\mathcal{D}$.

\begin{enumerate}
\item[$(1)$] If $S=1$, then $t \leq 2$, with equality if and only if $\mathcal{D}$ is a symmetric design.
\item[$(2)$] If $S=2$, then $\mathcal{D}$ is a quasi-symmetric design and $t \leq 4$, 
with equality if and only if $\mathcal{D}$ is the unique $4$-$(23,7,1)$ design.

\end{enumerate}
\end{Thm}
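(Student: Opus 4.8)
The plan is to deduce both parts from two classical counting inequalities and then to identify the extremal designs with known classifications. Here $S$ denotes the number of distinct block intersection numbers of the $t$-$(v,k,\lambda)$ design $\mathcal{D}=(X,\mathcal{B})$, and I may assume $\mathcal{D}$ is proper (so that $v>2S$), as holds for every support design occurring here. The two inputs are: (i) the uniform Ray--Chaudhuri--Wilson bound, which says that a family of $k$-subsets of a $v$-set whose pairwise intersections take at most $S$ distinct values has at most $\binom{v}{S}$ members; and (ii) the generalized Fisher inequalities, namely $|\mathcal{B}|\geq \binom{v}{e}$ when $t\geq 2e$ and $|\mathcal{B}|\geq 2\binom{v-1}{e}$ when $t\geq 2e+1$. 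Both are standard and are contained in the material cited before the statement.

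First I would establish the uniform bound $t\leq 2S$. Suppose, for contradiction, that $t\geq 2S+1$. Applying (ii) with $e=S$ and then (i) gives $2\binom{v-1}{S}\leq |\mathcal{B}|\leq \binom{v}{S}$. Since $\binom{v}{S}=\binom{v-1}{S}+\binom{v-1}{S-1}$ by Pascal's rule, this forces $\binom{v-1}{S}\leq \binom{v-1}{S-1}$, i.e.\ $v\leq 2S$, contradicting properness. Hence $t\leq 2S$, which yields $t\leq 2$ in part (1) and $t\leq 4$ in part (2). In part (2) the assertion that $\mathcal{D}$ is quasi-symmetric is immediate, since $S=2$ is exactly the definition of quasi-symmetry.

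It remains to treat the equality cases. For part (1) with $t=2$ and $S=1$, every two distinct blocks meet in a constant number $\mu$, so the block--block matrix satisfies $N^{\top}N=(k-\mu)I+\mu J$, whose spectrum I would compare with that of $NN^{\top}=(r-\lambda)I+\lambda J$; since $N$ has full row rank $v$ for a proper $2$-design, matching the nonzero eigenvalues forces $b=v$, so $\mathcal{D}$ is symmetric. Conversely a symmetric $2$-design has all block intersections equal to $\lambda$, so $S=1$, giving the stated equivalence. For part (2) with $t=4$ and $S=2$, both inequalities above become equalities (lower bound $\binom{v}{2}$ from $t\geq 4$, upper bound $\binom{v}{2}$ from $S=2$), so $|\mathcal{B}|=\binom{v}{2}$ and $\mathcal{D}$ is a tight $4$-design; the classification of tight $4$-designs then identifies it with the $4$-$(23,7,1)$ Witt design, whose intersection numbers are $\{1,3\}$, and conversely that design attains $t=4$.

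The main obstacle is this last step: the uniqueness in the equality case of part (2) depends on the classification of tight $4$-designs, which rests on a delicate Diophantine analysis, and I would invoke it as a known result rather than reprove it. This, together with the generalized Fisher inequalities and the symmetric-design characterization, is precisely the content packaged in the references cited before the statement.
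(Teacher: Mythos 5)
Your proof is correct and, in substance, coincides with what the paper does: the paper gives no proof of its own but cites exactly these facts from \cite{CL}, where the bound $t\le 2S$ comes from the Ray--Chaudhuri--Wilson bound $|\mathcal{B}|\le\binom{v}{S}$ combined with the generalized Fisher inequalities (your Pascal-rule contradiction is the standard argument), the $S=1$ equality case from the rank/eigenvalue characterization of symmetric $2$-designs, and the $S=2$ equality case from the classification of tight $4$-designs. The one caveat---shared by the paper's own statement, so not a defect of your argument relative to it---is that the tight-$4$-design classification also admits the complementary $4$-$(23,16,52)$ design, which is likewise quasi-symmetric with $t=4$, so the asserted uniqueness of the $4$-$(23,7,1)$ design holds only up to complementation (equivalently, under the convention $k\le v/2$).
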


\section{Case $d^{\bot}-t=1,2$}\label{sec:d-t=1,2}
In this section, 
we give the proof of Theorem \ref{thm:main1} (1), (2) and Theorem \ref{thm:main3} (1).

\subsection{Proof of Theorem \ref{thm:main1} (1), Case $d^{\perp}-t=1$}

Here, 
we provide the proof of Theorem \ref{thm:main1} (1); 
therefore, we always assume that $d^{\perp}-t=1$. 
Then the weight distribution of $C$ is $0,n/2,n$ and $n$ is even. 

\begin{Lem}\label{lem:1 or 3}
If $C$ is satisfying the AM-condition,
then $D_{n/2}$ is a $1$ or $3$-design.
\end{Lem}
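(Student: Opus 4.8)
The plan is to determine the set $S$ of block intersection numbers of $D_{n/2}$ and then combine the classification in Theorem~\ref{thm:S=1,2,3} with Alltop's theorem (Theorem~\ref{thm:alltop}), exploiting the self-complementarity of the middle-weight support design.

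First I would compute the possible intersection sizes of two distinct blocks. A block of $D_{n/2}$ is the support of a weight-$n/2$ codeword, so given two distinct such codewords $c_1,c_2$, the sum $c_1+c_2\in C$ has weight $n-2|\supp(c_1)\cap\supp(c_2)|$, and this weight must lie in $\{0,n/2,n\}$ by the assumed weight distribution. The value $0$ forces $c_1=c_2$, so for distinct blocks one gets $|\supp(c_1)\cap\supp(c_2)|\in\{0,n/4\}$. Moreover, since $\mathbf{1}_n\in C$, the complement $c+\mathbf{1}_n$ of any weight-$n/2$ codeword $c$ is again a weight-$n/2$ codeword; hence every block has its set-complement as a block, these complementary blocks are disjoint, and $0$ is always a block intersection number. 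Therefore $S$ is either $\{0\}$ or $\{0,n/4\}$, so $|S|\leq 2$.

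Next I would invoke Theorem~\ref{thm:S=1,2,3}. In the case $|S|=1$ the design $D_{n/2}$ has strength at most $2$, while in the case $|S|=2$ it is quasi-symmetric and has strength at most $4$. Finally I would use the self-complementarity of $D_{n/2}$ together with Alltop's theorem to eliminate even strengths: since $D_{n/2}$ is self-complementary, any even $t$ for which it is a $t$-design forces it to be a $(t+1)$-design, so its maximal strength must be odd. Because Theorem~\ref{thm:assmus-mattson} guarantees $t\geq 1$, the maximal strength is at least $1$; combined with the upper bounds just obtained, it equals $1$ when $|S|=1$ and lies in $\{1,3\}$ when $|S|=2$. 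In every case $D_{n/2}$ is a $1$- or $3$-design.

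The only genuine obstacle is the first step: one must argue cleanly that sums of middle-weight codewords remain within the three-weight set $\{0,n/2,n\}$ and that the complementary blocks always contribute the intersection number $0$, so that $|S|$ is provably bounded by $2$. Once the intersection numbers are in hand, the conclusion follows by a direct bookkeeping combination of Theorem~\ref{thm:S=1,2,3} and Theorem~\ref{thm:alltop}.
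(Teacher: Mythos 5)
Your proof is correct and follows essentially the same route as the paper's: self-complementarity of $D_{n/2}$ together with Alltop's theorem forces the maximal strength to be odd, and the bound on the number of block intersection numbers fed into Theorem~\ref{thm:S=1,2,3} caps it at $3$ (or $4$), giving $t=1$ or $3$. Your explicit verification that the intersection numbers lie in $\{0,n/4\}$, via the weight of $c_1+c_2$, merely fills in a detail the paper leaves implicit.
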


\begin{proof}
We state that $D_{n/2}$ is a self-complementary design.
By Theorem \ref{thm:alltop}, $D_{n/2}$ is a $t$-design with an odd number $t$.
Let $S$ be the block intersection numbers of $D_{n/2}$. 
Then we have $S=1$ or 2. 
By Theorem \ref{thm:S=1,2,3} (1) and (2), $D_{n/2}$ is a $t$-design with $t \leq 3$.
Hence, we have $t=1$ or $3$.
\end{proof}

\begin{Lem}\label{lem:not t+1}
If $C$ is satisfying the AM-condition and 
$D_{n/2}$ is not a $(t+1)$-design, 
then $(D^{\perp})_{w}$ is not a $(t+1)$-design for all $w$. 
\end{Lem}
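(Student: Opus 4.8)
The plan is to reduce both the $(t+1)$-design property of $D_{n/2}$ and those of the $(D^{\perp})_w$ to the vanishing of a single scalar attached to each degree-$(t+1)$ harmonic function, and then to link the two sides through Bachoc's identity (Theorem \ref{thm: Bachoc iden.}). Because here the weight distribution of $C$ is supported on $\{0, n/2, n\}$, for a fixed $f \in \Harm_{t+1}$ the harmonic weight enumerator collapses. First I would record that $\widetilde{f}(\emptyset) = 0$ and $\widetilde{f}(\Omega) = 0$ for every $f \in \Harm_k$ with $k \geq 1$: the first holds since no $k$-subset lies in $\emptyset$, and the second follows by summing the harmonicity relation $\sum_{z \supset y} f(z) = 0$ over all $y \in X_{k-1}$, which gives $k\sum_{z \in X_k} f(z) = 0$. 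Hence only the middle weight survives and
\[
W_{C,f}(x,y) = S_f\,(xy)^{n/2}, \qquad S_f := \sum_{c \in C_{n/2}} \widetilde{f}(c).
\]

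By Theorem \ref{thm:design} and the fact that $D_{n/2}$ is already a $t$-design (Theorem \ref{thm:assmus-mattson}), $D_{n/2}$ is a $(t+1)$-design if and only if $S_f = 0$ for all $f \in \Harm_{t+1}$. So the hypothesis that $D_{n/2}$ is not a $(t+1)$-design produces a fixed $f_0 \in \Harm_{t+1}$ with $S_{f_0} \neq 0$. The core step is to push $W_{C,f_0} = S_{f_0}(xy)^{n/2}$ through Theorem \ref{thm: Bachoc iden.}: writing $W_{C,f_0} = (xy)^{t+1} Z_{C,f_0}$ gives $Z_{C,f_0} = S_{f_0}(xy)^{n/2-t-1}$, and substituting into the transformation rule yields, after simplification,
\[
W_{C^{\perp},f_0}(x,y) = (-1)^{t+1}\,\frac{2^{t+1}}{|C|}\,S_{f_0}\,(xy)^{t+1}(x^2-y^2)^{n/2-t-1}.
\]

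Expanding, the coefficient of $x^{n-w}y^w$ is a nonzero scalar multiple of $S_{f_0}\binom{n/2-t-1}{j}$ with $j=(w-t-1)/2$, hence nonzero for every $w \in \{t+1, t+3, \dots, n-t-1\}$. Since $\mathbf{1}_n \in C$ forces every weight of $C^{\perp}$ to be even and $d^{\perp}=t+1$ is even in this case, these are exactly the nonzero weights of $C^{\perp}$ below $n$; for each such $w$ the degree-$(t+1)$ harmonic sum $\sum_{c \in (C^{\perp})_w}\widetilde{f_0}(c)$ is nonzero, so by Theorem \ref{thm:design} the support design $(D^{\perp})_w$ is not a $(t+1)$-design. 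This is the desired conclusion.

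I expect the main difficulty to be bookkeeping rather than conceptual: carefully tracking the powers of $2$, the sign $(-1)^{t+1}$, and the homogeneity degrees through the substitution in Theorem \ref{thm: Bachoc iden.}, and confirming that the surviving binomial coefficients never vanish across the whole range of $w$. A secondary point requiring care is to pin down precisely which weights occur in $C^{\perp}$, so that the phrase ``for all $w$'' is justified; here I would use the evenness of the $C^{\perp}$-weights and, when $\mathbf{1}_n \in C^{\perp}$, the complementation symmetry $w \leftrightarrow n-w$ to match the support weights below $n$ with the range $\{t+1,\dots,n-t-1\}$ above, the weight-$n$ block contributing only the trivial complete design.
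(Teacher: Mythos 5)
Your proposal follows essentially the same route as the paper's proof: the harmonic weight enumerator collapses to the single middle-weight term, Bachoc's identity (Theorem \ref{thm: Bachoc iden.}) turns $Z_{C,f}=a(xy)^{n/2-(t+1)}$ into a nonzero multiple of $(x^2-y^2)^{n/2-(t+1)}$, and the nonvanishing of all its coefficients gives the conclusion via Theorem \ref{thm:design}. You merely supply details the paper leaves implicit (why $\widetilde f$ vanishes on $\emptyset$ and $\Omega$, the explicit constant $(-1)^{t+1}2^{t+1}/|C|$, and the harmless weight-$n$ edge case), so the argument is correct and the same in substance.
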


\begin{proof}
The harmonic weight enumerator of $f\in \Harm_{t+1}$ is 
\begin{align*}
W_{C,f}
&=ax^{\frac{n}{2}}y^{\frac{n}{2}}\\
&=(xy)^{t+1}ax^{\frac{n}{2}-(t+1)}y^{\frac{n}{2}-(t+1)}. 
\end{align*}
for some $a\in \RR$ and $a \neq 0$. 
Set
\begin{align*}
Z_{C,f}:=x^{\frac{n}{2}-(t+1)}y^{\frac{n}{2}-(t+1)}. 
\end{align*}
Then by Theorem \ref{thm: Bachoc iden.}, 
\begin{align*}
Z_{C^{\perp},f}
&=a'(x+y)^{\frac{n}{2}-(t+1)}(x-y)^{\frac{n}{2}-(t+1)}, 
\end{align*}
for some $a'\in \RR$ and $a' \neq 0$. 
Then the coefficient of $x^{2(\frac{n}{2}-(t+1)-2i)}y^{2i}$ is not zero. 
By Theorem \ref{thm:design}, 
$(D^{\perp})_{w}$ is not a $(t+1)$-design for all $w$. 

\end{proof}

By Lemma \ref{lem:1 or 3} and \ref{lem:not t+1}, 
we have Theorem \ref{thm:main1} (1).




\subsection{Proof of Theorem \ref{thm:main1} (2) and Theorem \ref{thm:main3} (1), Case $d^{\perp}-t=2$}

We always assume that $d^{\perp}-t=2$. 
Then the weight distribution of $C$ is $0,d, n-d, n$.
We provide the proof of Theorem \ref{thm:main1} (2).


\begin{proof}
Let $S$ be the block intersection numbers of $D_{d}$. 
Then we have $S=1$ or 2. 
By Theorem \ref{thm:S=1,2,3} (1) and (2), $D_{d}$ is a $t$-design with $t \leq 3$.
Hence, we have $(d^{\perp},t)=(4,2)$.
\end{proof}

Here, 
we provide the proof of Theorem \ref{thm:main3} (1). 
\begin{proof}
The harmonic weight enumerator of $f\in \Harm_{3}$ is 
\begin{align*}
W_{C,f}
&=ax^{n-d}y^d+bx^{d}y^{n-d}\\
&=(xy)^{3}(ax^{n-d-3}y^{d-3}+bx^{d-3}y^{n-d-3}), 
\end{align*}
where $a,b\in \RR$. 
Set
\begin{align*}
Z_{C,f}=ax^{n-d-3}y^{d-3}+bx^{d-3}y^{n-d-3}. 
\end{align*}
Then by Theorem \ref{thm: Bachoc iden.}, 
\begin{align*}
Z_{C^{\perp},f}
=&a'(x+y)^{n-d-3}(x-y)^{d-3}\\
&+b'(x+y)^{d-3}(x-y)^{n-d-3}. 
\end{align*}
Since $d^{\perp}=4$, the coefficient of $x^{n-6}$ is zero. 
Hence, we have $a'+b'=0$. 
Then 
\begin{align}
Z_{C^{\perp},f}
=&a'(x+y)^{n-d-3}(x-y)^{d-3} -a'(x+y)^{d-3}(x-y)^{n-d-3} \notag\\
=&a'\big( (x+y)^{n-d-3}(x-y)^{d-3}-(x+y)^{d-3}(x-y)^{n-d-3} \big) \notag \\
=&a'(x^{2}-y^{2})^{d-3} \left((x+y)^{n-2d}- (x-y)^{n-2d} \right) \label{eqn:w2} 
\end{align}

Let 
\[
W_{C^\perp,f}=(xy)^{3}Z_{C^{\perp},f}=\sum{a_i}x^{n-i}y^i.
\]
By (\ref{eqn:w2}), for some $c\in \RR$, 
\begin{align*}
a_{2w+4}=c\times 
\left(\sum_{i=0}^{w}  (-1)^{w-i} \binom{d-3}{w-i}\binom{n-2d}{2i+1}\right).
\end{align*}
By Theorem \ref{thm:design}, 
if the equation 
\[ 
\sum_{i=0}^{w}  (-1)^{w-i} \binom{d-3}{w-i}\binom{n-2d}{2i+1}=0, 
\]
$D^{\perp}_{2w+4}$ is a $3$-design. 
\end{proof}

\section{Proof of Theorem \ref{thm:main1} (3), Case $d^{\perp}-t=3$}\label{sec:d-t=3}

In this section, we provide the proof of Theorem \ref{thm:main1} (3),
therefore,  we always assume that $d^{\perp}-t=3$. 
Then the weight distribution of $C$ is $0,d,n/2,n-d,n$ and $n$ is even.



Let} 
\[
W_{C} (x,y)=x^{n}+\alpha x^{n-d}y^{d}+\beta x^{\frac{n}{2}}y^{\frac{n}{2}}+\alpha x^{d}y^{n-d}+y^{n}
\]
be the weight enumerator of $C$. 
Since $\dim C=k$,
\begin{equation}\label{eq:dim k}
2\alpha+\beta+2=2^{k}.
\end{equation}
First, we show that 
if $d^\perp>8$ then 
we have the following constraint equations, (\ref{eq:y2})--(\ref{eq:y8}).
By Theorem \ref{thm: macwilliams iden.},
\begin{align*}
W_{C^\perp} (x ,y)=& 2^{-k} W_C(x+y,x-y) \\
=& 2^{-k} \big( (x+y)^{n}+\alpha (x+y)^{n-d}(x-y)^{d}+\beta (x+y)^{\frac{n}{2}}(x-y)^{\frac{n}{2}}\\
&+\alpha (x+y)^{d}(x-y)^{n-d}+(x-y)^{n} \big)\\
=& 2^{-k} \big( (x+y)^{n}+\alpha (x^{2}-y^{2})^{d}(x+y)^{n-2d}+\beta (x^{2}-y^{2})^{\frac{n}{2}}\\
&+\alpha (x^{2}-y^{2})^{d}(x-y)^{n-2d}+(x-y)^{n} \big).
\end{align*}
If the coefficient of $x^{n-2i}y^{2i}$ in $W_{C^\perp} (x ,y)$ is zero, 
then we have 
\begin{align}
&2\alpha \left(-d+\binom{n-2d}{2} \right)-\beta\frac{n}{2}+2 \binom{n}{2}=0
\mbox{ for $i=1$},
\label{eq:y2}\\
&2\alpha \left( \binom{d}{2}-d\binom{n-2d}{2}+ \binom{n-2d}{4} \right)+\beta\binom{n/2}{2}+2 \binom{n}{4}=0
\mbox{ for $i=2$},
\label{eq:y4}\\
&2\alpha \left(-\binom{d}{3}+ \binom{d}{2}\binom{n-2d}{2}-d\binom{n-2d}{4}+ \binom{n-2d}{6} \right)\label{eq:y6}\\
&\hspace{10pt}-\beta\binom{n/2}{3}+2 \binom{n}{6}=0\nonumber
\mbox{ for $i=3$}, 
\\
&2\alpha \left(\binom{d}{4}-\binom{d}{3}\binom{n-2d}{2}+ \binom{d}{2}\binom{n-2d}{4}-d\binom{n-2d}{6}+ \binom{n-2d}{8} \right)\label{eq:y8}\\
&\hspace{10pt}+\beta\binom{n/2}{4}+2 \binom{n}{8}=0\nonumber
\mbox{ for $i=4$}.
\end{align}

Therefore, 
if $d^\perp>8$ then 
we have the constraint equations, (\ref{eq:y2})--(\ref{eq:y8}). 
Using Equations (\ref{eq:dim k})--(\ref{eq:y8}), 
we show the following restriction on $n$ and $d$. 
\begin{Prop}\label{pro:d^perp=8}
If $C$ has $d^{\perp} \geq 8$, then 
\[
n^{2}-(4d+3)n+4d^{2}+8=0.
\]

\end{Prop}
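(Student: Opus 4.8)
The plan is to treat the vanishing conditions forced by $d^{\perp}\geq 8$ as an overdetermined linear system in the two unknowns $\alpha$ and $\beta$, and to read off the desired relation between $n$ and $d$ as the resulting compatibility condition. Since $d^{\perp}\geq 8$, the code $C^{\perp}$ has no words of weight $2$, $4$, or $6$, so the coefficients of $x^{n-2}y^{2}$, $x^{n-4}y^{4}$, and $x^{n-6}y^{6}$ in $W_{C^{\perp}}(x,y)$ all vanish. These three vanishing conditions are precisely Equations (\ref{eq:y2}), (\ref{eq:y4}), and (\ref{eq:y6}), each of which is linear in $\alpha$ and $\beta$ with coefficients that are explicit polynomials in $n$ and $d$. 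Note that we do \emph{not} use (\ref{eq:y8}), which need not hold when $d^{\perp}=8$. Writing these equations as $A_i\alpha+B_i\beta+C_i=0$ for $i=1,2,3$, the existence of a common solution $(\alpha,\beta)$ forces the augmented $3\times 3$ matrix with rows $(A_i,B_i,C_i)$ to be singular.

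Concretely, I would first solve (\ref{eq:y2}) and (\ref{eq:y4}) for $\alpha$ and $\beta$ as rational functions of $n$ and $d$; the $2\times 2$ coefficient determinant $A_1B_2-A_2B_1$ is a nonzero polynomial in $n,d$ under the standing assumptions $0<d<n/2$, so this elimination is legitimate. Substituting the resulting expressions into (\ref{eq:y6}) and clearing denominators yields a single polynomial identity $F(n,d)=0$; equivalently, $F$ agrees, up to a nonzero factor, with the determinant of the augmented coefficient matrix. All of the binomial coefficients appearing have degree at most $6$ in $n$ and $d$, so $F$ can be expanded and factored explicitly with Mathematica \cite{Mathematica}.

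The final step is to factor $F(n,d)$ and isolate the relevant factor. I expect $F$ to split as a product of the quadratic $n^{2}-(4d+3)n+4d^{2}+8$ with several extraneous factors, the latter being monomials or low-degree expressions such as powers of $n$ and factors of the form $(n-2d)$ that are strictly positive under the hypotheses that $n$ is even and $0<d<n/2$. Once these extraneous factors are shown not to vanish in the admissible range, $F(n,d)=0$ forces
\[
n^{2}-(4d+3)n+4d^{2}+8=0,
\]
as claimed. The main obstacle is the bookkeeping in the factorization together with the verification that none of the extraneous factors can vanish for a genuine code; this is exactly where one must invoke $0<d<n/2$ rather than treating $n$ and $d$ as free indeterminates.
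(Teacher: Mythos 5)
Your proposal is correct and follows essentially the same route as the paper: both treat (\ref{eq:y2})--(\ref{eq:y6}) as an overdetermined linear system in $\alpha,\beta$, express compatibility as the vanishing of a determinant (the paper eliminates the constant terms pairwise and uses that $(\alpha,\beta)$ is a nontrivial solution of the resulting homogeneous $2\times 2$ system, which is the same condition as your augmented-matrix determinant), and then factor by computer. The paper's factorization is $d(n-d)(n-2)(n-1)n^{3}(n-2d)^{2}(n^{2}-(4d+3)n+4d^{2}+8)$, confirming exactly the extraneous factors you anticipated.
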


\begin{proof}
Assume that $C$ has $d^{\perp}> 8$. 
Using Equations (\ref{eq:y2}) and (\ref{eq:y4}), 
we delete their constant terms as follows 
\[
\frac{(n-2)(n-3)}{2}\,\mbox{LHS of Eq.}(\ref{eq:y2})-6\,\mbox{LHS of Eq.}(\ref{eq:y4})=0
\Leftrightarrow X_{11}\alpha+X_{12}\beta=0,
\]
where $X_{11}$ (resp.~$X_{12}$) is 
the coefficient of $\alpha$ (resp.~$\beta$). 
Similarly, 
Using Equations (\ref{eq:y2}) and (\ref{eq:y6}), 
we delete their constant terms as follows 
\[
2\binom{n}{6}\,\mbox{LHS of Eq.}(\ref{eq:y2})-n(n-1)\,\mbox{LHS of Eq.}(\ref{eq:y6})=0
\Leftrightarrow X_{21}\alpha+X_{22}\beta=0,
\]
where $X_{21}$ (resp.~$X_{22}$) is 
the coefficient of $\alpha$ (resp.~$\beta$). 
We note that the explicit values of $X_{ij}$ are listed in
Section \ref{sec:Xij}. 

Therefore, if $C$ has $d^{\perp}> 8$, then the linear equations systems
\[
\begin{pmatrix}
X_{11}&X_{12}\\
X_{21}&X_{22}
\end{pmatrix}
\begin{pmatrix}
\alpha\\
\beta
\end{pmatrix}
=
\begin{pmatrix}
0\\
0
\end{pmatrix}.
\]
have a nontrivial solution. 
Hence, we have 
\[
\det\begin{pmatrix}
X_{11}&X_{12}\\
X_{21}&X_{22}
\end{pmatrix}
=0.
\]
By a direct computation, 
we have 
\[
d (n-d) (n-2) (n-1) n^3 (n-2 d)^2 (n^{2}-(4d+3)n+4d^{2}+8)=0
\]
Since $n\neq 0,1,2,d,2d$ and $d\neq0$, we have 
\[
n^{2}-(4d+3)n+4d^{2}+8=0.
\]
\end{proof}

The following theorem is a crucial part of the 
proof of Theorem \ref{thm:main1} (3). 
\begin{Thm}\label{thm:d^perp>8}
There is no code $C$ with $d^{\perp}>8$.
\end{Thm}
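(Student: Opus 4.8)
The plan is to show that the quadratic constraint from Proposition~\ref{pro:d^perp=8} is incompatible with the assumption $d^\perp>8$, which forces $n$ to grow roughly like $2d$ (since $d^\perp \approx n-2d$ must be large while $d$ is the minimum distance). My first step is to use Proposition~\ref{pro:d^perp=8}: any code with $d^\perp\geq 8$ must satisfy
\[
n^{2}-(4d+3)n+4d^{2}+8=0.
\]
Solving this quadratic in $n$ (or equivalently treating it as a relation tying $n$ and $d$ together), I would extract how $d^\perp$ relates to $n$ and $d$. The key observation is that $d^\perp$ cannot be too large relative to this constraint: the relation pins $n-2d$ to a value of order $\sqrt{n}$, so $d^\perp$ is forced into a bounded range. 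Concretely, I would solve for $n$ via the discriminant $(4d+3)^2-4(4d^2+8)=24d-23$, giving $n=\tfrac{1}{2}\big(4d+3\pm\sqrt{24d-23}\big)$, and then require $n$ to be an even integer with $24d-23$ a perfect square.

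The second step is to translate the integrality and parity conditions into a genuinely finite search. Writing $24d-23=m^2$ for an odd integer $m$, one gets $d=(m^2+23)/24$, and substituting back yields an explicit family of candidate pairs $(n,d)$. I would then compute $d^\perp$ for each candidate — using the weight-distribution constraints (\ref{eq:y2})--(\ref{eq:y8}) together with (\ref{eq:dim k}) to determine $\alpha,\beta$ and hence verify which weights actually occur — and check whether $d^\perp>8$ can ever hold. The expectation is that the quadratic relation, combined with the requirement that $C$ genuinely have five nonzero weights $0,d,n/2,n-d,n$ with all coefficients nonnegative integers, eliminates every candidate with $d^\perp>8$.

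The main obstacle will be the bookkeeping in the second step: even after the quadratic pins down the one-parameter family of $(n,d)$, I must rule out the infinitely many parameter values by a structural argument rather than case-by-case inspection, since $m$ (hence $d$ and $n$) is a priori unbounded. The cleanest route is probably to observe that for $d^\perp>8$ we derived \emph{four} independent equations (\ref{eq:y2})--(\ref{eq:y8}) in the \emph{two} unknowns $\alpha,\beta$, so Proposition~\ref{pro:d^perp=8} used only the first three; I would impose the fourth equation (\ref{eq:y8}) as well, obtaining a second polynomial relation between $n$ and $d$ analogous to the determinant computation. The intersection of this new relation with $n^{2}-(4d+3)n+4d^{2}+8=0$ should be a finite set of points (a resultant computation in $n,d$), and I would verify by the same Mathematica-assisted elimination that no solution has $d^\perp>8$. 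The delicate point is ensuring the two polynomial constraints are genuinely independent — i.e.\ that (\ref{eq:y8}) is not a consequence of the earlier equations on the locus cut out by Proposition~\ref{pro:d^perp=8} — so that their common zero locus is truly zero-dimensional and hence finite.
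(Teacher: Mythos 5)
Your final plan---observing that Proposition~\ref{pro:d^perp=8} uses only three of the four constraint equations, imposing (\ref{eq:y8}) as an additional independent relation to obtain a second polynomial constraint on $n$ and $d$, and showing it is incompatible with $n^{2}-(4d+3)n+4d^{2}+8=0$---is exactly the paper's proof, which forms $X_{31}\alpha+X_{32}\beta=0$ from (\ref{eq:y6}) and (\ref{eq:y8}), pairs it with $X_{11}\alpha+X_{12}\beta=0$, and derives a quartic in $n$ having no common factor with the quadratic. The opening detour through the discriminant $24d-23$ and the perfect-square parametrization is unnecessary, and your explicit resultant check of the (finite) common zero locus is if anything slightly more careful than the paper's terse assertion that the quartic ``does not have the factors of'' the quadratic.
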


\begin{proof}
Assume that $C$ has $d^{\perp}> 8$. 
Using Equations (\ref{eq:y6}) and (\ref{eq:y8}), 
we delete their constant terms as follows 
\[
\binom{n}{8}\,\mbox{LHS of Eq.}(\ref{eq:y6})-\binom{n}{6}\,\mbox{LHS of Eq.}(\ref{eq:y8})=0
\Leftrightarrow X_{31}\alpha+X_{32}\beta=0,
\]
where $X_{31}$ (resp.~$X_{32}$) is 
the coefficient of $\alpha$ (resp.~$\beta$). 
We note that the explicit values of $X_{ij}$ are listed in
Section \ref{sec:Xij}. 

If $C$ has $d^{\perp}> 8$, then the linear equations systems
\[
\begin{pmatrix}
X_{11}&X_{12}\\
X_{31}&X_{32}
\end{pmatrix}
\begin{pmatrix}
\alpha\\
\beta
\end{pmatrix}
=
\begin{pmatrix}
0\\
0
\end{pmatrix}.
\]
have a nontrivial solution. 
Hence, we have 
\[
\det\begin{pmatrix}
X_{11}&X_{12}\\
X_{31}&X_{32}
\end{pmatrix}
=0.
\]
By a direct computation, 
we have 
\begin{align*}
&d (n-d) (n-5) (n-4) (n-3) (n-2)^2 (n-1) n^3 (n-2d)^2 \\
&(n^4-(15 + 8 d) n^3+4 (25 + 3 d (5 + 2 d)) n^2\\
&-4 (60 + d (70 + d (15 + 8 d))) n+8 (53 + 35 d^2 + 2 d^4))=0
\end{align*} 
Since $n\neq 0,1,2,3,4,5,d,2d$ and $d\neq0$, we have 
\begin{align}
&n^4-(15 + 8 d) n^3+4 (25 + 3 d (5 + 2 d)) n^2 \label{eq:n4}\\
&-4 (60 + d (70 + d (15 + 8 d))) n+8 (53 + 35 d^2 + 2 d^4)=0. \notag
\end{align}
By Proposition~\ref{pro:d^perp=8}, we have
\[ n= \frac{4d+3 \pm \sqrt{24d-23}}{2}. \]
Since $24d-23$ must be a square number, we have 
\[ d= \frac{m^{2}+23}{24}, n= \frac{m^{2}+41 \pm 6m}{12}. \]
By a direct computation, Eq.(\ref{eq:n4}) does not have a non-trivial solution. 
Hence we have $d^{\perp}<8$, a contradiction.

\end{proof}


Now, we provide the proof of Theorem \ref{thm:main1} (3). 
\begin{proof}[Proof of Theorem \ref{thm:main1} (3)]
By Theorem \ref{thm:d^perp>8}, we have $t=1,3$, or 5.
Hence, we have $(d^{\perp},t)=(4,1), (6,3)$, or $(8,5)$.
\end{proof}








\section{Proofs of Theorems \ref{thm:main3} (2), \ref{thm:main4}
and Corollary \ref{cor:main1}}\label{sec:main3}

\subsection{Constraints on $n$ and $d$}

Let $C$ be satisfying the AM-condition. 
Here, 
we show the following lemma, which sets 
constraints on $n$, $k$, and $d$.
This lemma will be used in the proofs of 
Corollary \ref{cor:main1} and Theorem \ref{thm:main4}. 
\begin{Lem}\label{lem:d^perp=6}
\begin{enumerate}
\item[$(1)$] 
If $C$ has $d^{\perp} \geq 4$ and $\dim C=k$ $(2 \leq k \leq n-1)$, then
\[
\frac{n(2^{k-1}-n)}{(n-2d)^2}
\]
is a positive integer for some $k$.

\item[$(2)$] 

If $C$ has $d^{\perp} \geq 6$, then
\[
\frac{-n^{2}(n-1)(n-2)}{(n-2d)^{2} \left( n^{2}-(4d+3)n+4d^{2}+2 \right)}
\]
is a positive integer.

\item[$(3)$] 

If $C$ has $d^{\perp} \geq 8$, then 
$n$ is written as $n=(m^2+8)/3$ for some $m\in \ZZ$ and 
\[
\frac{n^2 \left(n^2-3 n+2\right)}{6 (3 n-8)}=\frac{\left(m^2+2\right) \left(m^2+5\right) \left(m^2+8\right)^2}{486 m^2}
\]
and
\[
\frac{2 \left(n^4-7 n^3+23 n^2-41 n+24\right)}{3 (3 n-8)},
\]
are positive integers.
\end{enumerate}
\end{Lem}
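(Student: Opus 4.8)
The plan is to treat all three parts uniformly. The vanishing of the low-weight coefficients of $W_{C^{\perp}}$ imposes linear relations among the three unknowns $\alpha=A_d$, $\beta=A_{n/2}$ and $2^{k}$, and in every case the displayed quantity is exactly the value of $\alpha$ or $\beta$ obtained by solving those relations. Since $\alpha$ and $\beta$ count codewords of a weight that genuinely occurs in the distribution $0,d,n/2,n-d,n$, they are \emph{positive integers}, which is precisely what each part asserts; so the work is purely the elimination and the identification of which codeword count each expression is.

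For part $(1)$ I would combine the dimension relation (\ref{eq:dim k}) with the single equation (\ref{eq:y2}) coming from $d^{\perp}\ge 4$. Using $\binom{n-2d}{2}-d=\tfrac12((n-2d)^2-n)$ to rewrite (\ref{eq:y2}) and eliminating $\beta$ via $\beta=2^{k}-2-2\alpha$, the pure $n$-terms cancel and one is left with $\alpha(n-2d)^2=n(2^{k-1}-n)$; since $n\ne 2d$ this gives $\alpha=n(2^{k-1}-n)/(n-2d)^2$, and $\alpha=A_d\ge 1$ gives the claim. For part $(2)$ I would instead use the two coefficient equations (\ref{eq:y2}) and (\ref{eq:y4}), which involve only $\alpha,\beta,n,d$, and solve this $2\times2$ system for $\alpha$ by Cramer's rule. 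The key simplifying observation is that the numerator depends on $n$ alone, namely $-n^3(n-1)(n-2)/24$, while the determinant of the system factors as $n\,(n-2d)^2\,Q/24$ with $Q=n^2-(4d+3)n+4d^2+2$; dividing yields $\alpha=-n^2(n-1)(n-2)/((n-2d)^2 Q)$, again a positive integer.

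For part $(3)$ I would first feed $d^{\perp}\ge 8$ into Proposition \ref{pro:d^perp=8} to get $n^2-(4d+3)n+4d^2+8=0$. Reading this as a quadratic in $d$, equivalently completing the square as $(n-2d)^2=3n-8$, integrality of $d$ forces $3n-8$ to be a perfect square $m^2$, i.e. $n=(m^2+8)/3$ with $m\in\ZZ$. Substituting $(n-2d)^2=3n-8$ and $Q=-6$ into the part-$(2)$ formula collapses $\alpha$ to $n^2(n-1)(n-2)/(6(3n-8))$, which under $n=(m^2+8)/3$ becomes $(m^2+2)(m^2+5)(m^2+8)^2/(486 m^2)$; this is the first displayed integer. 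For the second I would recover $\beta$ from the simplified (\ref{eq:y2}) as $\beta=4\alpha(n-4)/n+2(n-1)$, which simplifies to $2(n-1)(n-3)(n^2-3n+8)/(3(3n-8))=2(n^4-7n^3+23n^2-41n+24)/(3(3n-8))$; this is $\beta=A_{n/2}>0$.

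The computations are routine and, as the authors note, checkable by computer algebra, so the only genuinely delicate point is the passage in part $(3)$ from the quadratic relation to the parametrization $n=(m^2+8)/3$: one must argue that the discriminant $3n-8$ is forced to be a perfect square because $d$ is an integer. Everything else reduces to verifying the two polynomial identities — that the system determinant equals $n(n-2d)^2 Q/24$ and that $n^4-7n^3+23n^2-41n+24=(n-1)(n-3)(n^2-3n+8)$ — after which positivity is automatic from the interpretation of $\alpha=A_d$ and $\beta=A_{n/2}$ as counts of codewords of weights that actually appear.
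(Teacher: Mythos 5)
Your proposal is correct and follows essentially the same route as the paper: in each part one solves the linear relations among $\alpha$, $\beta$, and $2^k$ coming from (\ref{eq:dim k}) and the vanishing coefficients (\ref{eq:y2})--(\ref{eq:y6}), identifies the displayed quantity as $\alpha=A_d$ or $\beta=A_{n/2}$, and concludes from positivity and integrality of these codeword counts. The only cosmetic differences are that you eliminate via Cramer's rule in part $(2)$ where the paper eliminates $\beta$ directly, and in part $(3)$ you reach $d=\tfrac12\left(n-\sqrt{3n-8}\right)$ through Proposition \ref{pro:d^perp=8} rather than by re-solving the three equations, which is the same computation.
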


\begin{proof}

(1)
By Equations (\ref{eq:dim k}) and (\ref{eq:y2}), we have
\begin{align*}
& 2\alpha \left(-d+\binom{n-2d}{2} \right)-(-2\alpha -2 +2^{k})\frac{n}{2}+2 \binom{n}{2}=0 \\
\Leftrightarrow\ & \alpha \left( n-2d+(n-2d)(n-2d-1) \right)=n\cdot2^{k-1}-n^{2}\\
\Leftrightarrow\ & \alpha(n-2d)^{2}=n(2^{k-1}-n)\\
\Leftrightarrow\ & \alpha=\frac{n(2^{k-1}-n)}{(n-2d)^2}.
\end{align*}
Since $\alpha$ is a positive integer, 
\[
\frac{n(2^{k-1}-n)}{(n-2d)^2}
\]
is a positive integer.

(2)
By Equations (\ref{eq:y2}) and (\ref{eq:y4}), we have 
\begin{align*}
\alpha &\left( -\left(\frac{n}{2}-1 \right )d + \left( \frac{n}{2}-1 \right) \binom{n-2d}{2}+2\binom{d}{2}-2d\binom{n-2d}{2}+2\binom{n-2d}{4}\right)\\
&+\left( \frac{n}{2}-1 \right)\binom{n}{2}+2\binom{n}{4}=0.
\end{align*}
Since $\alpha$ is a positive integer, 
\begin{align*}
\alpha=&\frac{\displaystyle\left( \frac{n}{2}-1 \right)\binom{n}{2}+2\binom{n}{4}}
{\displaystyle\left( \frac{n}{2}-1 \right) \left(d-\binom{n-2d}{2} \right)-2\binom{d}{2}+2d\binom{n-2d}{2}-2\binom{n-2d}{4}} \\
=&\frac{-n^{2}(n-1)(n-2)}{(n-2d)^{2} \left( n^{2}-(4d+3)n+4d^{2}+2 \right)}
\end{align*}
is a positive integer.

(3)
By Equations (\ref{eq:y2})--(\ref{eq:y6}), we have 
\begin{align*}
\alpha=\frac{n^2 \left(n^2-3 n+2\right)}{6 (3 n-8)},
\beta=\frac{2 \left(n^4-7 n^3+23 n^2-41 n+24\right)}{3 (3 n-8)},
d=\frac{1}{2} \left(n-\sqrt{3 n-8}\right).
\end{align*}
Since $d$ is a positive integer, 
$3 n-8$ must be a square number. 
Therefore, we have 
\[
n=\frac{m^2 + 8}{3}.
\]
Since $\alpha$ and $\beta$ are positive integers, 
\[
\alpha=\frac{n^2 \left(n^2-3 n+2\right)}{6 (3 n-8)}=\frac{\left(m^2+2\right) \left(m^2+5\right) \left(m^2+8\right)^2}{486 m^2}
\]
and 
\[
\beta=\frac{2 \left(n^4-7 n^3+23 n^2-41 n+24\right)}{3 (3 n-8)}
\]
are positive integers.
\end{proof}

\subsection{Proof of Theorem \ref{thm:main3} (2)}

Here, 
we provide the proof of Theorem \ref{thm:main3} (2). 
\begin{proof}
The harmonic weight enumerator of $f\in \Harm_{t+1}$ is 
\begin{align*}
W_{C,f}
&=ax^{n-d}y^d+bx^{\frac{n}{2}}y^{\frac{n}{2}}+ax^{d}y^{n-d}\\
&=(xy)^{t+1}(ax^{n-d-(t+1)}y^{d-(t+1)}+bx^{\frac{n}{2}-(t+1)}y^{\frac{n}{2}-(t+1)}+ax^{d-(t+1)}y^{n-d-(t+1)}), 
\end{align*}
where $a,b\in \RR$. 
Set
\begin{align*}
Z_{C,f}=ax^{n-d-(t+1)}y^{d-(t+1)}+bx^{\frac{n}{2}-(t+1)}y^{\frac{n}{2}-(t+1)}+ax^{d-(t+1)}y^{n-d-(t+1)}. 
\end{align*}
Then by Theorem \ref{thm: Bachoc iden.}, 
\begin{align*}
Z_{C^{\perp},f}
=&a'(x+y)^{n-d-(t+1)}(x-y)^{d-(t+1)}\\
&+b'(x+y)^{\frac{n}{2}-(t+1)}(x-y)^{\frac{n}{2}-(t+1)}\\
&+a'(x+y)^{d-(t+1)}(x-y)^{n-d-(t+1)}. 
\end{align*}
By $d^{\perp}=4$ or $6$, the coefficient of $x^{n-2(t+1)}$ is zero. 
Hence, we have $2a'+b'=0$. 
Then 
\begin{align}
Z_{C^{\perp},f}
=&a'(x+y)^{n-d-(t+1)}(x-y)^{d-(t+1)} \notag \\
&-2a'(x+y)^{\frac{n}{2}-(t+1)}(x-y)^{\frac{n}{2}-(t+1)} \notag \\
&+a'(x+y)^{d-(t+1)}(x-y)^{n-d-(t+1)} \notag\\
=&a'\big( (x+y)^{n-d-(t+1)}(x-y)^{d-(t+1)} \notag\\
&-2(x+y)^{\frac{n}{2}-(t+1)}(x-y)^{\frac{n}{2}-(t+1)} \notag\\
&+(x+y)^{d-(t+1)}(x-y)^{n-d-(t+1)} \big) \notag \\
=&a'\big( (x^{2}-y^{2})^{d-(t+1)}(x+y)^{n-2d} \label{eqn:2w} \\
&-2(x^{2}-y^{2})^{\frac{n}{2}-(t+1)} \notag\\
&+(x^{2}-y^{2})^{d-(t+1)}(x-y)^{n-2d} \big).  \notag 
\end{align}


Let 
\[
W_{C^\perp,f}=(xy)^{t+1}Z_{C^{\perp},f}=\sum{a_i}x^{n-i}y^i.
\]
By (\ref{eqn:2w}), for some $c\in \RR$
\begin{align*}
a_{2w+t+1}&=c\\
&\times 
\left(\left(\sum_{i=0}^{w}  (-1)^{w-i} \binom{d-(t+1)}{w-i}\binom{n-2d}{2i}\right)+
(-1)^{w+1} \binom{n/2-(t+1)}{w}\right). 
\end{align*}
By Theorem \ref{thm:design}, 
if the equation 
\[ 
\left(\sum_{i=0}^{w}  (-1)^{w-i} \binom{d-(t+1)}{w-i}\binom{n-2d}{2i}\right)+
(-1)^{w+1} \binom{n/2-(t+1)}{w}=0, 
\]
$D^{\perp}_{2w+t+1}$ is a $(t+1)$-design. 
\end{proof}





\subsection{Proof of Corollary \ref{cor:main1}}


Here, 
we show the proof of Corollary \ref{cor:main1}. 
First, we show the following proposition: 
\begin{Prop}\label{thm:1}

Assume that $C$ has $(d^{\perp},t)=(4,1)$, or $(6,3)$.

\begin{enumerate}
\item[$(1)$]
\begin{enumerate}
\item [{\rm (a)}] In the case $(d^{\perp},t)=(4,1)$. 
\begin{enumerate}
\item[{\rm (i)}]
If the equation
\[ n^{2}-(4d+6)n+4d^{2}+32=0 \] is satisfied, then
$D^{\perp}_{6}$ and $D^{\perp}_{n-6}$ are $2$-designs if $n \equiv 0 \mod 4$, 
$D^{\perp}_{6}$ is a $2$-design if $n \equiv 2 \mod 4$.
\item[{\rm (ii)}]
Let $n\equiv 2\pmod{4}$ and $d\equiv 0\pmod{2}$. 
If the equation
\[ n^{2}-(4d+2)n+4d^{2}+8=0 \] is satisfied, then
$D^{\perp}_{n-4}$ is a $2$-design.

\item[{\rm (iii)}]
If the equation
\begin{align*} 
&n^4-(8 d+15) n^3+(12 d (2 d+5)+145) n^2\\
&-2 (2 d (d (8 d+15)+100)+285) n+16 \left(d^4+25 d^2+109\right)=0
\end{align*}
is satisfied, then
$D^{\perp}_{8}$ and $D^{\perp}_{n-8}$ are $2$-designs.

\end{enumerate}
\item [{\rm (b)}] In the case $(d^{\perp},t)=(6,3)$. 
\begin{enumerate}

\item[{\rm (i)}]
If the equation
\[ n^{2}-(4d+6)n+4d^{2}+56=0 \] is satisfied, then
$D^{\perp}_{8}$ and $D^{\perp}_{n-8}$ are $4$-designs if $n \equiv 0 \mod 4$, 
$D^{\perp}_{8}$ is a $4$-design if $n \equiv 2 \mod 4$.
\item[{\rm (ii)}]
Let $n\equiv 2\pmod{4}$ and $d\equiv 0\pmod{2}$. 
If the equation
\[ n^{2}-(4d+2)n+4d^{2}+16=0 \] is satisfied, then
$D^{\perp}_{n-6}$ is a $4$-design.

\item[{\rm (iii)}]
If the equation
\begin{align*} 
&n^4-(8 d+15) n^3+(12 d (2 d+5)+205) n^2\\
&-4 d (d (8 d+15)+160) n-930 n+16 d^2 \left(d^2+40\right)+4744=0
\end{align*}
 is satisfied, then
$D^{\perp}_{10}$ and $D^{\perp}_{n-10}$ are $4$-designs.
\end{enumerate}

\end{enumerate}

\item[$(2)$]  $n-2d=6$
\begin{enumerate}
\item [{\rm (a)}] Assume that $C$ has $(d^{\perp},t)=(4,1)$. If 
\[
u=\frac{3d-9\pm \sqrt{3d+1}}{4}
\]
is a positive number, then $D^{\perp}_{2u+8}$ is a $2$-design.
\item [{\rm (b)}] Assume that $C$ has $(d^{\perp},t)=(6,3)$. If 
\[
u=\frac{3d-15\pm \sqrt{3d-5}}{4}
\]
is a positive number, then $D^{\perp}_{2u+10}$ is a $4$-design.
\end{enumerate}

\item[$(3)$] $n-2d=8$
\begin{enumerate}
\item [{\rm (a)}] Assume that $C$ has $(d^{\perp},t)=(4,1)$. If $d$
is a square number, then $D^{\perp}_{d+4\pm \sqrt{d}}$ is a $2$-design.
\item [{\rm (b)}] Assume that $C$ has $(d^{\perp},t)=(6,3)$. If $d-2$
is a square number, then $D^{\perp}_{d+4\pm \sqrt{d-2}}$ is a $4$-design. 
\end{enumerate}

\end{enumerate}
\end{Prop}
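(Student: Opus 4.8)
The engine behind every assertion is Theorem~\ref{thm:main3}~(2): the design $D^{\perp}_{2w+t+1}$ is a $(t+1)$-design as soon as
\[
P(w):=\left(\sum_{i=0}^{w}(-1)^{w-i}\binom{d-(t+1)}{w-i}\binom{n-2d}{2i}\right)+(-1)^{w+1}\binom{n/2-(t+1)}{w}=0,
\]
since, up to a nonzero constant, $P(w)$ is the coefficient of $x^{n-(2w+t+1)}y^{2w+t+1}$ in $W_{C^{\perp},f}=(xy)^{t+1}Z_{C^{\perp},f}$, with $Z_{C^{\perp},f}$ given by (\ref{eqn:2w}). So the plan throughout is to select the index $w$ that produces the prescribed weight, evaluate $P(w)$, and decide when it vanishes.

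For part~(1) I would treat each of the displayed weights by fixing $w$ through $2w+t+1=W$: for $(d^{\perp},t)=(4,1)$ the weights $6,8$ correspond to $w=2,3$, and for $(6,3)$ the weights $8,10$ correspond to $w=2,3$. Expanding the short alternating sum $P(w)$ and clearing denominators, I expect $P(w)$ to factor as $(n-2d)^{2}$ times a nonzero numerical constant times the polynomial recorded in the statement; since $C$ has the five distinct weights $0,d,n/2,n-d,n$ we have $n\neq 2d$, so the factor $(n-2d)^{2}$ never vanishes and the design condition collapses exactly to the stated quadratic (weights $6,8$ in $(4,1)$ and $8,10$ in $(6,3)$) or to the stated quartic. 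The complementary weights $n-W$ I would obtain from the complement-of-a-design principle of Section~\ref{sec:design}: because $\mathbf 1_n\in C$ the dual $C^{\perp}$ is even, and when every weight of $C$ is even, equivalently $n\equiv 0\pmod 4$, one also has $\mathbf 1_n\in C^{\perp}$, so $D^{\perp}_{n-W}=\overline{D^{\perp}_{W}}$ inherits the $(t+1)$-design property. This is precisely why in the subcases~(i) and (b)(i) both $D^{\perp}_{W}$ and $D^{\perp}_{n-W}$ occur for $n\equiv 0\pmod 4$ while only the inner design survives for $n\equiv 2\pmod 4$, whereas the designs $D^{\perp}_{n-4}$ and $D^{\perp}_{n-6}$ of the subcases~(ii) are extracted directly from $P(w)$ at the matching index under the congruences $n\equiv 2\pmod 4$ and $d\equiv 0\pmod 2$.

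For parts~(2) and (3) I would substitute the linear constraint $n-2d=6$, respectively $n-2d=8$, directly into (\ref{eqn:2w}). Factoring out the common $(x^{2}-y^{2})^{d-(t+1)}$ and using $\tfrac{n}{2}-(t+1)=\bigl(d-(t+1)\bigr)+\tfrac{n-2d}{2}$, the bracket reduces to $(x+y)^{n-2d}+(x-y)^{n-2d}-2(x^{2}-y^{2})^{(n-2d)/2}$, and here the two polynomial identities
\[
(x+y)^{6}+(x-y)^{6}-2(x^{2}-y^{2})^{3}=4y^{2}(3x^{2}+y^{2})^{2},
\]
\[
(x+y)^{8}+(x-y)^{8}-2(x^{2}-y^{2})^{4}=64x^{2}y^{2}(x^{2}+y^{2})^{2}
\]
make $W_{C^{\perp},f}$ a monomial times $(x^{2}-y^{2})^{d-(t+1)}$ times a fixed squared quadratic. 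Consequently $P(w)$ collapses to a three-term alternating combination of the coefficients $\binom{d-(t+1)}{\cdot}$, i.e.\ a quadratic polynomial in the shifted index $u=w-3$, so that the weight is $2u+8$ in part~(2a) and $2u+10$ in part~(2b), while in part~(3) one solves directly for the weight. Setting this quadratic to zero and applying the quadratic formula produces the closed forms $u=\tfrac{3d-9\pm\sqrt{3d+1}}{4}$ and $u=\tfrac{3d-15\pm\sqrt{3d-5}}{4}$, and the weights $d+4\pm\sqrt d$ and $d+4\pm\sqrt{d-2}$; whenever the relevant root is a positive integer the corresponding coefficient vanishes and Theorem~\ref{thm:main3}~(2) delivers the design.

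The main obstacle is the algebraic bookkeeping in part~(1): verifying that each $P(w)$ genuinely factors as $(n-2d)^{2}$ times the advertised degree-two or degree-four polynomial in $(n,d)$, so that the nonvanishing of $n-2d$ cleanly isolates the stated equation. A secondary delicate point is the parity analysis governing the complementary designs, namely pinning down exactly when $\mathbf 1_n\in C^{\perp}$, equivalently when the $x\leftrightarrow y$ symmetry of $Z_{C^{\perp},f}$ holds, so that $D^{\perp}_{W}$ and $D^{\perp}_{n-W}$ are forced to appear together. The identities and the quadratic solving in parts~(2)--(3) are then routine once the factorizations above are in hand.
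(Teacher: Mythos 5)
Your proposal is correct and follows essentially the same route as the paper: the paper likewise evaluates the coefficient $a_{2w+t+1}$ of $W_{C^{\perp},f}$ at the relevant index $w$ for part (1) (finding, e.g., $a_{6}=\mathrm{const}\times(n-2d)^{2}(n^{2}-(4d+6)n+4d^{2}+32)$ and treating the remaining weights ``similarly''), and for parts (2)--(3) substitutes $n-2d=6$ or $8$ into (\ref{eqn:2w}), reduces to the same three-term binomial combination, and solves the resulting quadratic in $u$ (resp.\ $v$). The only stylistic difference is that you handle the complementary weights $n-W$ via design complementation with $\mathbf{1}_n\in C^{\perp}$ (where strictly one also needs $d$ even, which the stated equations force when $n\equiv 0\pmod 4$), whereas the paper implicitly computes those coefficients directly; both are fine.
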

\begin{proof}
\begin{enumerate}
\item[{\rm (1)}]
We give the proof of (a)--(i). 
The other cases can be proven similarly. 
Let 
\[
W_{C^\perp,f}=(xy)^{t+1}Z_{C^{\perp},f}=\sum{a_i}x^{n-i}y^i.
\]
Then for some $c\in \RR$
\begin{align*}
a_{2w+t+1}&=c\\
&\times 
\left(\left(\sum_{i=0}^{w} (-1)^{w-i} \binom{d-(t+1)}{w-i}\binom{n-2d}{2i}\right)+
(-1)^{w+1} \binom{n/2-(t+1)}{w}\right). 
\end{align*}
Let $(d^{\perp},t)=(4,1)$. Then we have 
\begin{align*}
a_{6}={\rm (constant)}\times 
(n-2 d)^2 (n^{2}-(4d+6)n+4d^{2}+32).
\end{align*}
Therefore, if $ n^{2}-(4d+6)n+4d^{2}+32=0$, then 
by Theorem \ref{thm:design}, 
$D^{\perp}_{6}$ is a $2$-design.

\item[{\rm (2)}]
We give the proof of (a). 
Case (b) can be proven similarly.  
Let $n-2d=6$ and $t=1$. 
By (\ref{eqn:2w}),  
\begin{align*}
Z_{C^{\perp},f}
=&a'\big( (x^{2}-y^{2})^{d-2}(x+y)^{6} 
-2(x^{2}-y^{2})^{d+1} 
+(x^{2}-y^{2})^{d-2}(x-y)^{6} \big) \\
=&a'(x^{2}-y^{2})^{d-2}\big(  (x+y)^{6}-2(x^{2}-y^{2})^{3}+(x-y)^{6} \big) \\
=&a'(x^{2}-y^{2})^{d-2}\big( 4y^{6}+24x^{2}y^{4}+36x^{4}y^{2} \big) \\  
=&4a'\sum_{i=0}^{d-2} \binom{d-2}{i} x^{2d-4-2i}(-y^{2})^{i} \big(y^{6}+6x^{2}y^{4}+9x^{4}y^{2} \big). 
\end{align*}
The coefficient of $x^{2d-4-2u}y^{2u+6}$ in $Z_{C^{\perp},f}$ is equal to 
\[ 
4a'\left( \binom{d-2}{u}-6 \binom{d-2}{u+1}+9 \binom{d-2}{u+2} \right).
\]
Then we have
\begin{align*}
&  \binom{d-2}{u}-6 \binom{d-2}{u+1}+9 \binom{d-2}{u+2}=0 \\
\Leftrightarrow\ & 16u^{2}-(24d-72)u+9d^{2}-57d+80=0 \\
\Leftrightarrow\ & u=\frac{3d-9\pm \sqrt{3d+1}}{4}.
\end{align*}
Let 
\[
W_{C^\perp,f}=(xy)^{2}Z_{C^{\perp},f}=\sum{a_i}x^{n-i}y^i.
\]

If 
\[
u=\frac{3d-9\pm \sqrt{3d+1}}{4}
\]
is a positive number, the coefficient of $x^{2d-4-2u}y^{2u+6}$ in $Z_{C^{\perp},f}$ is zero,
then we have $a_{2u+8}=0$. Hence, by Theorem \ref{thm:design}, $D^{\perp}_{2u+8}$ is a $2$-design.

\item[{\rm (3)}]
We give the proof of (a). 
Case (b) can be proven similarly.  
Let $n-2d=8$ and $t=1$.
By (\ref{eqn:2w}),  
\begin{align*}
Z_{C^{\perp},f}
=&a'\big( (x^{2}-y^{2})^{d-2}(x+y)^{8} 
-2(x^{2}-y^{2})^{d+2} 
+(x^{2}-y^{2})^{d-2}(x-y)^{8} \big) \\
=&a'(x^{2}-y^{2})^{d-2}\big(  (x+y)^{8}-2(x^{2}-y^{2})^{4}+(x-y)^{8} \big) \\
=&a'(x^{2}-y^{2})^{d-2}\big( 64x^{2}y^{6}+128x^{4}y^{4}+64x^{6}y^{2} \big) \\  
=&64a'\sum_{i=0}^{d-2} \binom{d-2}{i} x^{2d-4-2i}(-y^{2})^{i} \big(x^{2}y^{6}+2x^{4}y^{4}+x^{6}y^{2} \big). 
\end{align*}
The coefficient of $x^{2d-4-2v}y^{2v+6}$ in $Z_{C^{\perp},f}$ is equal to 
\[ 
64a'\left( \binom{d-2}{v}-2 \binom{d-2}{v+1}+ \binom{d-2}{v+2} \right).
\]
Then we have
\begin{align*}
&  \binom{d-2}{v}-2 \binom{d-2}{v+1}+ \binom{d-2}{v+2}=0 \\
\Leftrightarrow\ & 4v^{2}-(4d-16)v+d^{2}-9d+16=0 \\
\Leftrightarrow\ & v=\frac{d-4\pm \sqrt{d}}{2}.
\end{align*}

If $d$ is a square number,
\[
v=\frac{d-4\pm \sqrt{d}}{2}
\]
is a positive number, hence the coefficient of $x^{2d-4-2v}y^{2v+6}$ in $Z_{C^{\perp},f}$ is zero.
Therefore, by Theorem \ref{thm:design}, 
$D^{\perp}_{2v+8}$ is a $2$-design.
\end{enumerate}
\end{proof}




Now we give the proof of Corollary \ref{cor:main1}. 

\begin{proof}[Proof of Corollary \ref{cor:main1}]

Let $C$ be a binary code of length $n$ and 
minimum distance $d$. 
We assume that $d^{\perp}-t=3$ and $\mathbf{1}_n\in C$. 
Then the weight distribution of $C$ is $0,d,n/2,n-d,n$ and 
$n$ is even. 

For the case $(d^{\perp},t)=(4,1)$ and $n\leq 10000$, 
we give $n$, $d$, and $w$ in Table \ref{tab: B} satisfying the 
conditions in Proposition \ref{thm:1} and Lemma \ref{lem:d^perp=6}.



This completes the proof of Corollary \ref{cor:main1}. 
\end{proof}







\subsection{Proof of Theorem \ref{thm:main4}}
\begin{proof}[Proof of Theorem \ref{thm:main4}]
By Lemma \ref{lem:d^perp=6} (3), 
$n$ is written as $n=(m^2+8)/3$ for some $m\in \ZZ$ and 
\[
\alpha=\frac{\left(m^2+2\right) \left(m^2+5\right) \left(m^2+8\right)^2}{486 m^2}
\]
is a positive integer. Therefore, 
\[
\frac{\left(m^2+2\right) \left(m^2+5\right) \left(m^2+8\right)^2}{m^2}
=m^6+23 m^4+186 m^2+\frac{640}{m^2}+608
\]
is also a positive integer. 
Since $640/m^2$ is a positive integer, we have $m=1,2,4,8$. 
Then, since $n>d^{\perp}=8$, we have $n=24$, and $\alpha=759$, $\beta=2576$. 
Thus $C$ has $d=8$ and $|C|=2^{12}$.
Hence, $C$ is the unique extended Golay code $\mathcal{G}_{24}$.
\end{proof}











\section{Concluding Remarks}\label{sec:rem}

\begin{Rem}
\begin{enumerate}

\item [(1)]

In \cite{MN-tec}, we found examples satisfying the 
condition of Theorem \ref{thm:main3} (2). 
These examples are triply even codes of length $48$, 
which were classified by 
Betsumiya--Munemasa \cite{Betsumiya-Munemasa 2012}. 

This gives rise to a natural question: 
are there other examples satisfying the 
condition of Theorem \ref{thm:main3}?

\item [(2)]

For the case $(d^{\perp},t)=(6,3)$ and $n\leq 10000$, 
there are no values $n$, $d$, and $w$ satisfying the 
conditions in Proposition \ref{thm:1} and Lemma \ref{lem:d^perp=6}.

\item [(3)]
In Corollary \ref{cor:main1}, 
for $n\leq 10000$, 
we give the parameters $n$ and $d$ 
such that $\delta(C)<s(C)$ occurs. 
We remark that for $n\leq 1000$, 
Table \ref{tab: B} gives complete values such that 
$\delta(C)<s(C)$ occurs. 

\item [(4)]

We will discuss the cases $d^{\perp}-t\geq 4$ in the subsequent papers \cite{MN-St-2}.

\end{enumerate}

\end{Rem}

\section*{Acknowledgments}
The authors thank 
Akihiro Munemasa for helpful discussions and computations in this research.
The second author thanks Masaaki Harada and Hiroki Shimakura for their comments 
in a seminar at Tohoku university.
The first author is supported by JSPS KAKENHI (18K03217). 


\appendix 
\section{Values of $X_{ij}$}\label{sec:Xij}

\begin{align*}
X_{11}&=-2 d (d-n) \left((n-2 d)^2-n+2\right).\\
X_{12}&=-\frac{1}{4} (n-2) n^2.\\
X_{21}&=8 d^2 \binom{n}{6}+\frac{1}{12} (n-1) n\\ 
&\Big(-24 \binom{n-2 d}{6}+16 d^5-32 d^4 n+24 d^4+24 d^3 n^2\\
&-48 d^3 n+60 d^3-8 d^2 n^3+30 d^2 n^2-62 d^2 n+12 d^2\\
&+d n^4-6 d n^3+17 d n^2-12 d n+8 d\Big)-8 d n \binom{n}{6}+2 n^2 \binom{n}{6}-2 n \binom{n}{6}.\\
X_{22}&=\frac{1}{48} (n-1) n \left(n^3-6 n^2+8 n\right)-n \binom{n}{6}.\\
X_{31}&=\frac{1}{12} \Big(-16 d^6 \binom{n}{6}+32 d^5 n \binom{n}{6}
-16 d^5 \binom{n}{6}-32 d^5 \binom{n}{8}-24 d^4 n^2 \binom{n}{6}\\
&+24 d^4 n \binom{n}{6}-38 d^4 \binom{n}{6}+64 d^4 n \binom{n}{8}
-48 d^4 \binom{n}{8}+8 d^3 n^3 \binom{n}{6}-8 d^3 n^2 \binom{n}{6}\\
&-48 d^3 n^2 \binom{n}{8}+16 d^3 n \binom{n}{6}+52 d^3 \binom{n}{6}
+96 d^3 n \binom{n}{8}-120 d^3 \binom{n}{8}-d^2 n^4 \binom{n}{6}\\
&-2 d^2 n^3 \binom{n}{6}+16 d^2 n^3 \binom{n}{8}+13 d^2 n^2 \binom{n}{6}
-60 d^2 n^2 \binom{n}{8}-58 d^2 n \binom{n}{6}+6 d^2 \binom{n}{6}\\
&+124 d^2 n \binom{n}{8}-24 d^2 \binom{n}{8}+d n^4 \binom{n}{6}
-2 d n^4 \binom{n}{8}-6 d n^3 \binom{n}{6}+12 d n^3 \binom{n}{8}\\
&+19 d n^2 \binom{n}{6}-34 d n^2 \binom{n}{8}-14 d n \binom{n}{6}
+12 d \binom{n}{6}+24 d n \binom{n}{8}-16 d \binom{n}{8}\\
&+48 d \binom{n}{6} \binom{n-2 d}{6}+48 \binom{n}{8} \binom{n-2 d}{6}
-48 \binom{n}{6} \binom{n-2 d}{8}\Big).\\
X_{32}&=\frac{1}{192} \Big(n^4 (-\binom{n}{6})+12 n^3 \binom{n}{6}
-8 n^3 \binom{n}{8}-44 n^2 \binom{n}{6}+48 n^2 \binom{n}{8}\\
&+48 n \binom{n}{6}-64 n \binom{n}{8}\Big).
\end{align*}

\section{Table of $(d^\perp,t)=(4,1)$}
{\scriptsize
\[
\hspace{-55pt}
\begin{array}{c||c|c|c|c|c|c|c|c|c|c}\label{tab: B}
n &16&22&22&38&40&48&54&72&72&80\\ \hline
d &4&6&8&16&16&16&22&26&33&36\\ \hline
w &6,10&6&18&24&16,24&6,42&50&6,66&48,58&34,46\\ 
\hline\hline
n &86&86&102&118&118&136&136&166&198&208\\ \hline
d &32&40&44&46&56&64&65&74&96&100\\ \hline
w &6&58&98&6&94&60,76&94,108&162&156&94,114\\ 
\hline\hline
n &246&246&272&296&296&328&328&342&358&358\\ \hline
d &104&112&116&120&144&142&161&158&156&176\\ \hline
w &6&242&6,266&8,288&136,160&6,322&234,256&338&6&256\\ 
\hline\hline
n &400&422&422&454&456&520&566&566&582&646\\ \hline
d &196&186&208&212&202&256&254&280&274&320\\ \hline
w &186,214&6&328&450&6,450&246,276&6&438&578&468\\ 
\hline\hline
n &656&726&776&808&886&918&968&968&976&1062\\ \hline
d &324&344&385&400&422&456&446&481&484&508\\ \hline
w &310,346&722&564,598&384,424&882&706&6,962&706,744&466,510&1058\\ 
\hline\hline
n &1072&1126&1160&1238&1254&1296&1360&1416&1462&1478\\ \hline
d &496&560&576&616&602&604&676&662&704&692\\ \hline
w &6,1066&864&556,604&906&1250&6,1290&654,706&6,1410&1458&6\\ 
\hline\hline
n &1478&1576&1606&1672&1672&1686&1808&1808&1926&2056\\ \hline
d &736&784&800&786&833&814&852&900&932&1024\\ \hline
w &1084&760,816&1228&6,1666&1228,1278&1682&6,1802&874,934&1922&996,1060\\ 
\hline\hline
n &2182&2248&2248&2320&2326&2454&2486&2486&2600&2742\\ \hline
d &1058&1066&1121&1156&1160&1192&1182&1240&1296&1334\\ \hline
w &2178&6,2242&1656,1714&1126,1194&1714&2450&6&1894&1264,1336&2738\\ 
\hline\hline
n &2822&2896&2998&3046&3208&3272&3366&3366&3366&3536\\ \hline
d &1408&1444&1496&1484&1600&1633&1612&1642&1680&1764\\ \hline
w &2148&1410,1486&2214&3042&1564,1644&2418,2488&6&3362&2488&1726,1810\\ 
\hline\hline
n &3656&3656&3702&3856&3880&3958&4054&4166&4240&4422\\ \hline
d &1754&1825&1808&1852&1936&1976&1982&2080&2116&2164\\ \hline
w &6,3650&2704,2778&3698&6,3850&1896,1984&3006&4050&3084&2074,2166&4418\\ 
\hline\hline
n &4598&4598&4616&4806&4822&4936&5008&5168&5206&5416\\ \hline
d &2216&2296&2304&2354&2408&2465&2500&2496&2552&2704\\ \hline
w &6&3406&2260,2356&4802&3658&3658,3744&2454,2554&6,5162&5202&2656,2760\\ 
\hline\hline
n &5526&5622&5648&5840&5896&5896&6022&6022&6054&6278\\ \hline
d &2760&2758&2732&2916&2854&2945&2916&3008&2972&3042\\ \hline
w &4098&5618&6,5642&2866,2974&6,5890&4374,4468&6&4468&6050&6\\ 
\hline\hline
n &6278&6280&6408&6502&6672&6736&6806&6806&6966&7078\\ \hline
d &3136&3136&3201&3194&3236&3364&3302&3400&3424&3536\\ \hline
w &4756&3084,3196&4756,4854&6498&6,6666&3310,3426&6&5154&6962&5256\\ 
\hline\hline
n &7208&7216&7446&7496&7496&7638&7696&7942&8072&8200\\ \hline
d &3600&3504&3662&3642&3745&3712&3844&3908&4033&4096\\ \hline
w &3544,3664&6,7210&7442&6,7490&5568,5674&6&3786,3910&7938&5998,6108&4036,4164\\ 
\hline\hline
n &8454&8518&8720&8822&8822&8976&8982&9256&9288&9288\\ \hline
d &4162&4256&4356&4296&4408&4372&4424&4624&4526&4641\\ \hline
w &8450&6444&4294,4426&6&6558&6,8970&8978&4560,4696&6,9282&6906,7024\\ 
\hline\hline
n &9446&9526&9766&9766&9808&9928&9928&&&\\ \hline
d &4720&4694&4762&4880&4900&4842&4961&&&\\ \hline
w &7024&9522&6&7384&4834,4974&6,9922&7384,7506&&&
\end{array}
\]}




\end{document}